\newcommand{\beq}{\begin{equation}}
\newcommand{\eeq}{\end{equation}}
\newcommand{\bea}{\begin{eqnarray}}
\newcommand{\eea}{\end{eqnarray}}
\newcommand{\beas}{\begin{eqnarray*}}
\newcommand{\eeas}{\end{eqnarray*}}
\definecolor{dg}{rgb}{0, 0.5, 0}
\newtheorem{theorem}{Theorem}[section]
\newtheorem{definition}[theorem]{Definition}
\newtheorem{proposition}[theorem]{Proposition}
\newtheorem{prop}[theorem]{Proposition}
\newtheorem{lemma}[theorem]{Lemma}
\newtheorem{remark}[theorem]{Remark}
\newtheorem{example}[theorem]{Example}
\newtheorem{examples}[theorem]{Examples}
\newtheorem{foo}[theorem]{Remarks}
\newenvironment{proof}{\addvspace{\medskipamount}\par\noindent{\it
Proof}.}
{\unskip\nobreak\hfill$\Box$\par\addvspace{\medskipamount}}
\newcommand{\bH}{\mathbb H}
\newcommand{\Z}{\mathbb Z}
\newcommand{\bS}{\mathbb S}
\newcommand{\R}{\mathbb R}
\newcommand{\C}{\mathbb C}
\newcommand{\B}{\mathbb B}
\newcommand{\A}{\mathbb A}
\title{The horizontal heat kernel on the quaternionic anti de-Sitter spaces and related twistor spaces}
\author{Fabrice Baudoin\footnote{Author supported in part by the NSF Grant DMS 1660031}, Nizar Demni, Jing Wang }
\begin{document}

\maketitle

\begin{abstract}
The geometry of the quaternionic anti-de Sitter fibration  is studied in details. As a consequence, we obtain   formulas for the horizontal Laplacian and subelliptic heat kernel of the fibration. The heat kernel formula is explicit enough to derive small time asymptotics. Related twistor spaces and corresponding heat kernels are also discussed and the connection to the quaternionic magnetic Laplacian is done.
\end{abstract}

\tableofcontents 

\section{Introduction}

The main goal of the paper is to study in details the geometry, the horizontal Laplacians and the horizontal heat kernels of the quaternionic anti-de Sitter fibration
\begin{align}\label{fibration intro}
\mathbf{SU}(2)\to \mathbf{AdS}^{4n+3}(\mathbb{H})\to \bH H^n
\end{align}
and of related twistor spaces. This fibration is the quaternionic counterpart of the complex anti-de Sitter fibration that was studied in details in \cite{BD,Bo,W}.  Anti-de Sitter spaces play a fundamental role in mathematical physics since they appear as  exact solutions of Einstein's field equations for an empty universe with a negative cosmological constant; see for instance \cite{G} and the references therein for a pedagogical account on the importance of those spaces and the celebrated AdS/CFT correspondence.  

Anti-de Sitter spaces also appear as model spaces in Sasakian geometry: the complex anti de-Sitter space is the model space of a negative Sasakian manifold (see \cite{BGKT}) and the quaternionic anti-de Sitter fibration studied in this paper can be thought as the model space of a negative 3-Sasakian manifold (see \cite{BG,Jel}).

\

Let us  describe the main results of the paper. Section \ref{sec-geometry} is devoted to the geometric study of the quaternionic anti-de Sitter fibration. We approach and describe this geometry from two complementary points of view: the point of view of pseudo-Riemannian submersions (see \cite{BA}) and the point of view of quaternionic contact geometry (see \cite{biquard}). Then, an important observation is that $\mathbf{SU}(2)$ and the hyperbolic quaternionic space $\bH H^n$ are both symmetric spaces of rank 1. The fibration \eqref{fibration intro} therefore inherits a  large symmetry group. As a consequence, the heat kernel of the horizontal Laplacian on $\mathbf{AdS}^{4n+3}(\mathbb{H})$ only depends on two variables: a variable $r$ which is the radial coordinate on the base space $\mathbb{H}H^n$ and a variable $\eta$ which is the radial coordinate on the fiber $\mathbf{SU}(2)$.   We prove that in these coordinates,  the  horizontal Laplacian of the fibration writes
\[
\frac{\partial^2}{\partial r^2}+((4n-1)\coth r+3\tanh r)\frac{\partial}{\partial r}+\tanh^2r \left(\frac{\partial^2}{\partial \eta^2}+2\cot \eta\frac{\partial}{\partial \eta}\right).
\]

In section \ref{sec-kernel}, using this expression, we shall derive an integral representation of the corresponding heat kernel $p_t(r, \eta)$ by two different methods.

\begin{itemize}
\item The first method is geometric and uses an analytic continuation in the fiber variables similar to the Wick rotations used in physics;
\item The second method is related to the study of heat kernels associated with the generalized Maass Laplacian (see e.g. \cite{Int-OM}) and uses the ideas from \cite{BD}. 
\end{itemize}

Our main result can be summarized as follows:

\begin{theorem}\label{IntRepintro} 
Let $r \ge 0$ and $ \eta \in [0,\pi)$, then:
 \begin{align}\label{eq-IntRep}
 & p_t(r,\eta)\nonumber\\
 = &\frac2\pi \int_{0}^{\infty} \frac{\sinh u}{\sin \eta}  \left\{\sum_{m \geq 0} e^{-m(m+2)t} \sin[(m+1)\eta] \sinh[(m+1)u] \right\}q_{t, 4n+3}(\cosh r \cosh u) du \nonumber\\
 =&\frac{ e^{t}}{  \sqrt{\pi t}  }  \sum_{k\in\mathbb Z}\int_0^{+\infty} \frac{ \sinh y \sin \left(  \frac{ (\eta+2k\pi) y}{2t}\right) }{\sin \eta} e^{\frac{y^2- (\eta+2k\pi)^2}{4t}} q_{t,4n+3}( \cosh r\cosh y ) dy.
\end{align}
where
\begin{equation*}
q_{t, 4n+3}(\cosh s) := \frac{e^{-(2n+1)^2t}}{(2\pi)^{2n+1}\sqrt{4\pi t}}\left(-\frac{1}{\sinh(s)}\frac{d}{ds}\right)^{2n+1} e^{-s^2/(4t)},
\end{equation*} 
is the heat kernel on the real $4n+3$ dimensional hyperbolic space.
\end{theorem}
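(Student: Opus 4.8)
The plan is to establish the formula by first diagonalizing the horizontal Laplacian in the fiber variable $\eta$, then reducing the remaining radial operator to a known hyperbolic heat kernel. Since the operator splits as $L = L_r + \tanh^2 r \, L_\eta$ where $L_r = \partial_r^2 + ((4n-1)\coth r + 3\tanh r)\partial_r$ and $L_\eta = \partial_\eta^2 + 2\cot\eta\,\partial_\eta$, and $L_\eta$ is (up to a constant) the radial part of the Laplacian on $\mathbf{SU}(2) \cong \mathbb{S}^3$, I would expand $p_t(r,\eta)$ in the eigenfunctions of $L_\eta$. The relevant eigenfunctions are the zonal spherical functions $\Phi_m(\eta) = \frac{\sin[(m+1)\eta]}{(m+1)\sin\eta}$ with eigenvalue $-m(m+2)$, $m \ge 0$. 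Writing $p_t(r,\eta) = \sum_{m\ge0} c_m \Phi_m(\eta) \phi_{m,t}(r)$, the coefficient function $\phi_{m,t}(r)$ must satisfy $\partial_t \phi_{m,t} = (L_r - m(m+2)\tanh^2 r)\phi_{m,t}$ with initial condition concentrating at $r=0$.

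The key observation — this is essentially the ``Wick rotation'' alluded to in the first method — is that the operator $L_r - m(m+2)\tanh^2 r$ is, after the substitution $m(m+2) = -[(m+1)^2 - 1]$ and using $\tanh^2 r = 1 - \operatorname{sech}^2 r$, conjugate to a radial Laplacian on a higher-dimensional hyperbolic space. Concretely, I expect that with an appropriate exponential weight (a ground-state transform of the form $e^{ct}(\cosh r)^{-(m+1)}$ times a hyperbolic heat kernel), $\phi_{m,t}(r)$ can be written in terms of $q_{t,4n+3}$ evaluated at shifted arguments, or via the subordination-type integral $\int_0^\infty (\text{kernel in } u) \, q_{t,4n+3}(\cosh r \cosh u)\, du$. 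The factor $\sinh u \sinh[(m+1)u]$ in the first line of \eqref{eq-IntRep} is exactly what one gets from the $\mathbb{S}^3$-analytic-continuation: replacing the compact fiber variable by a noncompact one turns $\sin$-type spherical functions into $\sinh$-type ones, and the measure $\sinh u\,du$ is the hyperbolic volume element paired against the $4n+3$-dimensional kernel along the ``total space'' geodesic of length $\operatorname{arccosh}(\cosh r \cosh u)$ — which is precisely the geodesic distance formula one obtains by treating the fibration metric as a warped/pseudo-Riemannian product. I would verify that this candidate solves the PDE termwise by plugging in and using the known eigenfunction equation for $q_{t,4n+3}$, and check the initial condition by a Laplace-type asymptotic as $t\to0^+$.

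For the second equality, I would apply a Jacobi-type theta transformation (Poisson summation) to the series $\sum_{m\ge0} e^{-m(m+2)t}\sin[(m+1)\eta]\sinh[(m+1)u]$. Completing $m(m+2) = (m+1)^2 - 1$ and extending the sum symmetrically over $m+1 \in \mathbb{Z}$, the inner series becomes (up to the $e^t$ factor) a theta function in the variable $\eta$ with Gaussian weight $e^{-(m+1)^2 t}$; Poisson summation in $m$ converts it to a sum over $k \in \mathbb{Z}$ of Gaussians in $(\eta + 2k\pi)/(2t)$, producing the factor $e^{(y^2 - (\eta+2k\pi)^2)/(4t)}\sin((\eta+2k\pi)y/(2t))$ after identifying $u = y$. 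This is a standard but slightly delicate manipulation — one must track the odd/even symmetry carefully so that the $\sin \times \sinh$ product transforms correctly and the $1/\sqrt{\pi t}$ normalization emerges.

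The main obstacle I anticipate is the first step: correctly identifying the radial component $\phi_{m,t}(r)$ and proving rigorously that the proposed integral representation solves the heat equation for $L_r - m(m+2)\tanh^2 r$ with the correct singular initial data. The algebraic juggling between $\tanh^2 r$, $\operatorname{sech}^2 r$, the dimension shift $4n+3$, and the spectral parameter $m(m+2)$ must line up exactly with the explicit Millson-type formula for $q_{t,4n+3}$; getting the ground-state conjugation factor and the geodesic-distance argument $\cosh r\cosh u$ right — rather than off by a power of $\cosh r$ or a shift — is where the real care is needed. The theta transformation and the termwise verification of the heat equation, by contrast, should be routine once the first representation is in hand.
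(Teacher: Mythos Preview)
Your outline is essentially the paper's second method: expand in the $\mathbf{SU}(2)$-eigenfunctions $U_m(\cos\eta)=\sin[(m+1)\eta]/\sin\eta$, reduce to the radial heat equation for
\[
\mathcal{L}_m = \frac{\partial^2}{\partial r^2}+((4n-1)\coth r+3\tanh r)\frac{\partial}{\partial r} - m(m+2)\tanh^2 r,
\]
and then identify the resulting kernel with an integral of $q_{t,4n+3}(\cosh r\cosh u)$ against $\sinh[(m+1)u]\sinh u$. Your Poisson-summation argument for the second equality is exactly how the paper passes between the two displayed forms (it simply quotes the two classical expressions for the $\mathbf{SU}(2)$ heat kernel $s_t$).

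Where you differ is in the treatment of $\mathcal{L}_m$. The paper does not attempt a direct ground-state conjugation; instead it recognizes $\mathcal{L}_m$ (after the shift $\tanh^2 r = 1 - 1/\cosh^2 r$) as the operator $L_{2n}^{\alpha\beta}$ of Intissar--Ould Moustapha with $\alpha=1+m/2$, $\beta=-m/2$, imports their explicit wave kernel, applies the transmutation formula $e^{t\mathcal{L}_m}=(4\pi t)^{-1/2}\int e^{-s^2/4t}\cos(s\sqrt{-\mathcal{L}_m})\,ds$, integrates by parts $2n+1$ times, and finally performs the change of variable $\cosh s=\cosh r\cosh u$ together with the hypergeometric identity $(m+1)\,{}_2F_1(m+2,-m,3/2;(1-\cosh u)/2)=\sinh[(m+1)u]/\sinh u$. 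Your proposed ``ground-state transform plus verification'' is plausible but, as you yourself flag, it is precisely the step where the algebra has to line up exactly; the paper sidesteps this by borrowing the wave-kernel result. If you want a self-contained argument along your lines, the cleanest route is probably to verify directly that $\int_0^\infty \sinh u\,\sinh[(m+1)u]\,q_{t,4n+3}(\cosh r\cosh u)\,du$ solves $\partial_t=\mathcal{L}_m$ and has the right $t\to 0$ behavior --- this is doable but requires care with the $\partial_r$-action on $q_{t,4n+3}(\cosh r\cosh u)$.

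The paper also gives a first, more geometric proof that you allude to but do not quite capture. Rather than expanding in $\eta$, one writes $L=\square_{\mathbf{AdS}}+\Delta_{\mathbf{SU}(2)}$ with $\square_{\mathbf{AdS}}=\Delta_{\mathbb{H}H^n}-\cosh^{-2}r\,\Delta_{\mathbf{SU}(2)}$, factorizes the semigroup $e^{tL}=e^{t\Delta_{\mathbf{SU}(2)}}e^{t\square_{\mathbf{AdS}}}$ (the two pieces commute), and then observes that the analytic continuation $\eta\mapsto i\eta$ intertwines $\square_{\mathbf{AdS}}$ with the radial Laplacian $\Delta_{H^{4n+3}}$ via $\cosh\delta=\cosh r\cosh\eta$. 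This gives the formula in one stroke without ever isolating $\phi_{m,t}$; the $\sinh[(m+1)u]$ factors appear simply as the analytic continuation of $\sin[(m+1)u]$ in the $\mathbf{SU}(2)$ heat kernel. What your approach buys is a more explicit, mode-by-mode picture; what the paper's first method buys is a structural explanation of why $H^{4n+3}$ appears at all.
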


The role of the  $4n+3$ dimensional real hyperbolic space in this formula is puzzling. One can explain it heuristically by noting that the Cartan dual of $\mathbf{AdS}^{4n+3}(\mathbb{H})$ obtained by complexification of the fiber $\mathbf{SU}(2)$ can be thought of as the real $4n+3$ dimensional real hyperbolic space (see the long comment after the proof of theorem \ref{IntRep}).  Several consequences of the formula are then discussed. In particular, we obtain small time asymptotics for $p_t(r,\eta)$ and prove that
\[
-\frac{e^{-4nt}}{2\pi\cosh r\sin\eta}\frac{\partial}{\partial\eta}p_t^{\mathbb{C}}(r, \eta)=p_t(r,\eta).
\]
where $p_t^{\mathbb{C}}$ is the horizontal heat kernel of the complex anti de-Sitter fibration as studied in \cite{BD,W}.

In section \ref{sec-twistor}, we study the horizontal Laplacian and corresponding horizontal heat kernel of the twistor space $\mathbb{C}H_1^{2n+1}$ over $\mathbb{H}H^n$. This twistor space appears as a $\mathbb{S}^1$ quotient of the quaternionic anti de-Sitter fibration according to the following commutative diagram:

 \begin{diagram}\label{diag}
  & & \mathbb{S}^1 & & \\
  & \ldTo & \dTo & & \\
 \mathbf{SU}(2) & \rTo &\mathbf{AdS}^{4n+3}(\mathbb{H}) & \rTo & \mathbb{H}H^n \\
 \dTo & &\dTo & \ruTo & \\
 \mathbb{CP}^1 & \rTo & \mathbb{C}H_1^{2n+1} & & 
\end{diagram}

We then prove that  the radial part of the  horizontal Laplacian on  $\mathbb{C}H_1^{2n+1}$ is given by 
\begin{equation}\label{L-CPintro}
\frac{\partial^2}{\partial r^2}+((4n-1)\coth r+3\tanh r)\frac{\partial}{\partial r}+\tanh^2r\left( \frac{\partial^2}{\partial \phi^2}+2\cot 2\phi\frac{\partial}{\partial \phi}\right),
\end{equation}
where $r$ is, once again, the radial coordinate on $\mathbb{H}H^n$ and $\phi$ is the radial coordinate on  the complex projective space $\mathbb{CP}^1$. The corresponding heat kernel writes
\begin{align*}
 & h_t(r,\phi)\\
 = & \int_{0}^{\infty} (\sinh u)^2 \left\{\sum_{m=0}^{+\infty} (2m+1) e^{-4m(m+1)t} P_m^{0,0}(\cos2\phi )P_m^{0,0}(\cosh 2u)\right\}q_{t, 4n+3}(\cosh r \cosh u) du.
\end{align*}
where $P_m^{0,0}$ are  Legendre polynomials.

Finally, in section \ref{sec-further}, we discuss further developments. A connection to the quaternionic magnetic Laplacian is made and an interesting sub-d'Alembertian on the real 4n+3 dimensional hyperbolic space is discussed.

\

\textbf{Acknowledgments:} \textit{The authors would like to thank Brian Hall for general discussions about  complexification of symmetric spaces and the notion of Cartan dual. }

\section{Geometry of the quaternionic anti de-Sitter fibration}\label{sec-geometry}

\subsection{Definition of $\mathbf{AdS}^{4n+3}(\mathbb{H})$}
Let $\mathbb{H}$ be the quaternionic field 
\[
\mathbb{H}=\{q=t+xI+yJ+zK, (t,x,y,z)\in\R^4\},
\]
where  $I,J,K \in \mathbf{SU}(2)$ are the Pauli matrices:
\[
I=\left(
\begin{array}{ll}
i & 0 \\
0&-i 
\end{array}
\right), \quad 
J= \left(
\begin{array}{ll}
0 & 1 \\
-1 &0 
\end{array}
\right), \quad 
K= \left(
\begin{array}{ll}
0 & i \\
i &0 
\end{array}
\right).
\]
Then, the quaternionic norm is given by $|q |^2 =t^2 +x^2+y^2+z^2$ and the set of unit quaternions is identified with $\mathbf{SU}(2)$. 

We consider the quaternionic anti-de Sitter space $\mathbf{AdS}^{4n+3}(\mathbb{H})$ which is defined as a pseudo-hyperbolic space by:
\[
\mathbf{AdS}^{4n+3}(\mathbb{H})=\lbrace q=(q_1,\cdots,q_{n+1})\in \mathbb{H}^{n+1}, \| q \|^2_H =-1\rbrace,
\]
where 
\[
\|q\|_H^2 := \sum_{k=1}^{n}|q_k|^2-|q_{n+1}|^2.
\] 
In real coordinates $q_i = (t_i, x_i, y_i, z_i), 1 \leq i \leq n+1$, this pseudo-norm may be written as
\[
t_1^2+x_1^2+y_1^2+z_1^2+\cdots+t_n^2+x_n^2+y_n^2+z_n^2-t_{n+1}^2-x_{n+1}^2-y_{n+1}^2-z_{n+1}^2=-1, 
\]
and as such, $\mathbf{AdS}^{4n+3}(\mathbb{H})$ is embedded in the flat $4n+4$-dimensional space $\R^{4n,4}$ endowed with the Lorentzian real signature $(4n,4)$ metric 
\begin{equation}\label{eq-lorent-q}
ds^2=dt_1^2+dx_1^2+dy_1^2+dz_1^2+\cdots+dt_n^2+dx_n^2+dy_n^2+dz_n^2-dt_{n+1}^2-dx_{n+1}^2-dy_{n+1}^2-dz_{n+1}^2.
 \end{equation}
As a matter of fact, $\mathbf{AdS}^{4n+3}(\mathbb{H})$ is naturally endowed with a pseudo-Riemannian structure of signature $(4n,3)$. 

\subsection{$\mathbf{SU}(2)$ action and fibration}

Using quaternionic multiplication on the left, $\mathbf{SU}(2)$  acts isometrically on $\mathbf{AdS}^{4n+3}(\mathbb{H})$.  In fact, if we consider the rotations $e^{S\theta}$, $S=I, J, K$, then straightforward computations show that the infinitesimal generators are given in real coordinates by
 \begin{equation}\label{eq-1}
\frac{d}{d\theta}f(e^{I\theta}q)\mid_{\theta=0}=\sum_{i=1}^{n+1}\left(-x_i\frac{\partial f}{\partial t_i}+t_i\frac{\partial f}{\partial x_i}-z_i\frac{\partial f}{\partial y_i}+y_i\frac{\partial f}{\partial z_i}\right)=T_1
 \end{equation}
 \begin{equation}\label{eq-2}
 \frac{d}{d\theta}f(e^{J\theta}q)\mid_{\theta=0}= \sum_{i=1}^{n+1}\left(-y_i\frac{\partial f}{\partial t_i}+z_i\frac{\partial f}{\partial x_i}+t_i\frac{\partial f}{\partial y_i}-x_i\frac{\partial f}{\partial z_i}\right)=T_2
 \end{equation}
 \begin{equation}\label{eq-3}
 \frac{d}{d\theta}f(e^{K\theta}q)\mid_{\theta=0}=\sum_{i=1}^{n+1}\left(-z_i\frac{\partial f}{\partial t_i}-y_i\frac{\partial f}{\partial x_i}+x_i\frac{\partial f}{\partial y_i}+t_i\frac{\partial f}{\partial z_i}\right)=T_3
 \end{equation}
 
 The quotient space $\mathbf{AdS}^{4n+3}(\mathbb{H})/ \mathbf{SU}(2)$ can be identified with the quaternionic hyperbolic space $\bH H^n$ endowed with its canonical quaternionic K\"ahler metric. 
 The projection map $\mathbf{AdS}^{4n+3}(\mathbb{H})\to \bH H^n$ is a pseudo-Riemannian submersion with totally geodesic fibers isometric to $\mathbf{SU}(2)$. The fibration
\[
\mathbf{SU}(2)\to \mathbf{AdS}^{4n+3}(\mathbb{H})\to \bH H^n
\]
is referred to as the quaternionic anti de-Sitter fibration and may be thought of as the hyperbolic counterpart of the quaternionic Hopf fibration studied in details in \cite{BW2}. Moreover, it shows that $\mathbf{AdS}^{4n+3}(\mathbb{H})$ is simply connected since both the base space and the fiber are so. We refer to \cite{BA} for a description of this submersion, and more generally for a description of totally geodesic submersions from pseudo-hyperbolic spaces. 

\subsection{Laplacians and horizontal heat kernel}
The horizontal Laplacian $L$ on $\mathbf{AdS}^{4n+3}(\mathbb{H})$ is defined as the horizontal lift of the Laplace-Beltrami operator on $\bH H^n$ by the submersion $\mathbf{AdS}^{4n+3}(\mathbb{H})\to \bH H^n$. Note that since the metric on $\bH H^n$ is Riemannian, the operator $L$ is semi-elliptic (that is, its principal symbol is non negative). More than that, from H\"ormander's theorem it is easily seen to be subelliptic  because the horizontal distribution on $\mathbf{AdS}^{4n+3}(\mathbb{H})$, i.e. the distribution transverse to the fibers of the submersion, is everywhere two-step bracket generating (this comes from the quaternionic contact structure described below). 

Furthermore, according to \cite{BeBo}, one has the decomposition
\[
\square_{\mathbf{AdS}^{4n+3}(\mathbb{H})}=L-\Delta_{\mathbf{SU}(2)} ,
\]
where $\square_{\mathbf{AdS}^{4n+3}(\mathbb{H})}$ is the d'Alembertian on $\mathbf{AdS}^{4n+3}(\mathbb{H})$ i.e. the Laplace-Beltrami operator of the pseudo-Riemannian metric, and $\Delta_{\mathbf{SU}(2)}=T_1^2+T_2^2+T_3^2$ is the Laplace-Beltrami operator of the fibers. Since the fibers are totally geodesic and thus isometric to $\mathbf{SU}(2)$,  $\Delta_{\mathbf{SU}(2)} $ is simply the Laplace-Beltrami operator on $\mathbf{SU}(2)$. 

The Riemannian metric on $\mathbf{AdS}^{4n+3}(\mathbb{H})$, i.e. the one obtained from the pseudo-Riemannian one by simply changing the signature in the direction of the fibers, is complete, thus the horizontal Laplacian is essentially self-adjoint on the space of smooth and compactly supported functions (see Section 5.1 in \cite{BEMS}). As a consequence, $L$ is the generator of a strongly continuous semigroup $e^{tL}$ in $L^2(\mathbf{AdS}^{4n+3}(\mathbb{H}))$ that uniquely solves the heat equation. By subellipticity of $L$, this semigroup admits a heat kernel that we will refer to as the horizontal or subelliptic heat kernel.

\subsection{Quaternionic contact structure}
In addition to the fibration structure on $\mathbf{AdS}^{4n+3}(\mathbb{H})$,  there is a quaternionic contact structure (see \cite{biquard,BG} for a description of this structure). Indeed, if we consider  the quaternionic  form
\begin{align}\label{eq-contact-form}
\alpha = \frac12\left( \sum_{i=1}^n (dq_i\, \overline{q_i}-q_i\,d\overline{q_i})- (dq_{n+1}\,\overline{q_{n+1}}-q_{n+1}\,d\overline{q_{n+1}})\right)=\alpha_1I+\alpha_2J+\alpha_3K,
\end{align}
then the triple $(\alpha_1,\alpha_2,\alpha_3)$ gives the quaternionic contact structure.  If we denote
\[
T=- \sum_{i=1}^n \left(q_i\frac{\partial}{\partial q_i}-\frac{\partial}{\partial \overline{q_i}}\overline{q_i}\right)+\left(q_{n+1}\frac{\partial}{\partial q_{n+1}}-\frac{\partial}{\partial \overline{q_{n+1}}}\overline{q_{n+1}}\right) 
\]
then $T=T_1I+T_2J+T_3K$, $\alpha(T)=3$ and we can easily find that
\[
\alpha_i(T_j)=-\delta_{ij}.
\]
Thus, $T_1$, $T_2$, $T_3$ are the three Reeb vector fields of $\alpha$ and are also Killing vector fields on $\mathbf{AdS}^{4n+3}(\mathbb{H})$. In this way, $\mathbf{AdS}^{4n+3}(\mathbb{H})$ is a negative 3-K contact structure (see \cite{Jel}).

\subsection{Cylindric coordinates}
In this paragraph, we shall write down the horizontal Laplacian in cylindrical coordinates: these consist of coordinates $(w_1,\dots,w_n)$ in the base space $\bH H^n$ and $(\theta_1, \theta_2, \theta_3)$ in the Lie algebra $\mathfrak{su}(2)$ of traceless skew-Hermitian $2\times 2$ matrices. More precisely, we shall consider the map

\begin{eqnarray*}
\bH H^n \times \mathfrak{su}(2) & \rightarrow & \mathbf{AdS}^{4n+3}(\mathbb{H}) \\
(w_1,\dots, w_n, \theta_1, \theta_2, \theta_3) & \mapsto &\left(\frac{e^{ I\theta_1 +J\theta_2 +K\theta_3} w_1}{\sqrt{1-\rho^2}},\cdots,\frac{e^{ I\theta_1 +J\theta_2 +K\theta_3}w_n}{\sqrt{1-\rho^2}},\frac{e^{ I\theta_1 +J\theta_2 +K\theta_3}}{\sqrt{1-\rho^2}} \right)
\end{eqnarray*}
where 
\begin{equation*}
\rho = \sqrt{\sum_{j=1}^{n}|w_j|^2}
\end{equation*}
and $w_i = q_i/q_{n+1}$, $i=1,\dots, n,$ are inhomogeneous coordinates in $\bH H^n$. Accordingly, 
\[
dq_i=q_{n+1}dw_i+dq_{n+1}w_i, \quad d\overline{q_i}=\overline{dw_i}\overline{q_{n+1}}+\overline{w_i}\overline{dq_{n+1}}, 
\]
and we can then rewrite the contact form \eqref{eq-contact-form} as follows:
\begin{align*}
\alpha & = \frac12\left( \sum_{i=1}^n ((q_{n+1}dw_i+dq_{n+1}w_i)\, \overline{w_i}\,\overline{q_{n+1}}-q_{n+1}w_i\,(d\overline{w_i}\,\overline{q_{n+1}}+\overline{w_i}d\overline{q_{n+1}}))- (dq_{n+1}\,\overline{q_{n+1}}-q_{n+1}\,d\overline{q_{n+1}})\right)\\
&=\frac12\left( \sum_{i=1}^n q_{n+1}dw_i\, \overline{w_i}\,\overline{q_{n+1}}-q_{n+1}w_i\,d\overline{w_i}\,\overline{q_{n+1}}\right)
-\frac12\left(1-\rho^2 \right)\left(dq_{n+1}\overline{q_{n+1}}-q_{n+1}d\overline{q_{n+1}}   \right).
\end{align*}
Set ${\frak{q}} := { I\theta_1 +J\theta_2 +K\theta_3}$, then 
\begin{equation*}
q_{n+1}=\frac{e^{\frak{q}}}{\sqrt{1-\rho^2}},  \quad \overline{e^{\frak{q}}}=e^{-{\frak{q}}}, \quad \overline{{\frak{q}}}=-{\frak{q}}={\frak{q}}^{-1}\eta^2, 
\end{equation*}
where $\eta^2=\theta^2_1+\theta^2_2+\theta^2_3$ is the squared Riemannian distance from the identity in $SU(2)$. Also, the relation $0=d({\frak{q}}\,{\frak{q}}^{-1})={\frak{q}}d{\frak{q}}^{-1}+d{\frak{q}}\, {\frak{q}}^{-1}$ yields
\[
d{\frak{q}}^{-1}=-{\frak{q}}^{-1}d{\frak{q}}\cdot {\frak{q}}^{-1}, 
\]
Now, we need to compute $de^{\frak{q}}\cdot e^{-{\frak{q}}}$. To this end, recall that 
\begin{equation*}
e^{\frak{q}}=\cos\eta+\frac{\sin\eta}{\eta}(\theta_1I+\theta_2J+\theta_3K).
\end{equation*}
Consequently, 
\begin{align*}
de^{\frak{q}}\cdot e^{-{\frak{q}}}=\left(d(\cos\eta)+d\left( \frac{\sin\eta}{\eta}{\frak{q}}\right)\right)\left( \cos\eta-\frac{\sin\eta}{\eta}{\frak{q}}\right)
\end{align*}
and 
\begin{align*}
de^{\frak{q}}\cdot e^{-{\frak{q}}}-e^{\frak{q}}\cdot de^{-{\frak{q}}}&=\cos\eta\, d\left( \frac{\sin\eta}{\eta}\right){\frak{q}}-\frac{\sin\eta}{\eta}{\frak{q}}\, d\cos\eta\\
&=\cos^2\eta\, d\left( \frac{\tan\eta}{\eta}{\frak{q}} \right)
\end{align*}
As a result, 
\[
\alpha=\frac12e^{\frak{q}}\left( \frac{dw_i\, \overline{w_i}-w_i\,d\overline{w_i}}{1-\rho^2}-\cos^2\eta\, d\left( \frac{\tan\eta}{\eta}{\frak{q}} \right)\right)e^{-{\frak{q}}}.
\]
Equivalently, we can consider the following one form
\[
\Lambda:=e^{-{\frak{q}}}\,\alpha\, e^{\frak{q}}=\frac12\left( \frac{dw_i\, \overline{w_i}-w_i\,d\overline{w_i}}{1-\rho^2}-\cos^2\eta\, d\left( \frac{\tan\eta}{\eta}{\frak{q}} \right)\right)
\]
whose horizontal part 
\begin{equation}\label{QKah}
\zeta := \frac12\left( \frac{dw_i\, \overline{w_i}-w_i\,d\overline{w_i}}{1-\rho^2}\right)
\end{equation}
 is the quaternionic K\"ahler form on $\bH H^n$, which induces the following sub-Riemannian metric
\begin{equation}\label{eq-metric}
h_{i\bar{k}}=\frac12\left(\frac{\delta_{i{k}}}{1-\rho^2}+\frac{\overline{w_i}w_k}{(1-\rho^2)^2}\right).
\end{equation}
We are now ready to derive the sub-Laplacian on $\mathbf{AdS}^{4n+3}(\mathbb{H})$. Due to radial symmetry, we are interested in the radial part of the sub-Laplacian, in the sense as follows.
\begin{definition} Let $\psi$ be the map from $\mathbf{AdS}^{4n+3}(\mathbb{H}) $ to $ [0,1) \times [0,\pi) $ such that 
\[
\psi  \left(\frac{e^{ I\theta_1 +J\theta_2 +K\theta_3} w_1}{\sqrt{1-\rho^2}},\cdots,\frac{e^{ I\theta_1 +J\theta_2 +K\theta_3}w_n}{\sqrt{1-\rho^2}},\frac{e^{ I\theta_1 +J\theta_2 +K\theta_3}}{\sqrt{1-\rho^2}} \right)=\left(\rho, \eta \right),
\]
 We denote by $\mathcal{D}$ the space of smooth and compactly supported functions on $  [0,1) \times [0,\pi) $. Then the cylindrical part of $L$ is defined by $\tilde{L}:\mathcal{D}\to C^\infty( \mathbf{AdS}^{4n+3}(\mathbb{H}))$ such that for every  $f \in \mathcal{D}$, we have
\[
L(f \circ \psi)=(\tilde{L} f) \circ \psi.
\]
\end{definition}

\begin{proposition}
The radial part of the sub-Laplacian on $\mathbf{AdS}^{4n+3}(\mathbb{H})$ is given in the coordinates $(r,\eta)$ by
\begin{equation}\label{radial-L}
\tilde{L}=\frac{\partial^2}{\partial r^2}+((4n-1)\coth r+3\tanh r)\frac{\partial}{\partial r}+\tanh^2r \left(\frac{\partial^2}{\partial \eta^2}+2\cot \eta\frac{\partial}{\partial \eta}\right).
\end{equation}
where $r=\tanh^{-1}\rho$ is the Riemannian distance on $\bH H^n$ from the origin.
\end{proposition}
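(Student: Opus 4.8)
The plan is to read off $L$ in the cylindrical coordinates $(w_1,\dots,w_n,\theta_1,\theta_2,\theta_3)$ from the connection $1$-form $\alpha$ and the sub-Riemannian metric $h_{i\bar k}$ of \eqref{eq-metric}, and then specialize to radial functions $F=F(r,\eta)$. Since $\mathcal H=\ker\alpha_1\cap\ker\alpha_2\cap\ker\alpha_3$ and $\alpha_a(T_b)=-\delta_{ab}$, the horizontal lift of the coordinate field $\partial_{w_i}$ is $\partial_{w_i}^{\mathcal H}=\partial_{w_i}+\sum_a\alpha_a(\partial_{w_i})\,T_a$ (and similarly for $\partial_{\overline{w_i}}$), the coefficients $\alpha_a(\partial_{w_i})$ being read off from \eqref{QKah}; the conjugation by $e^{\mathfrak q}$ that relates $\alpha$ to $\Lambda$ is an orthogonal rotation of the triple $(T_1,T_2,T_3)$, hence leaves invariant the $\mathbf{SU}(2)$-invariant combinations of the $T_a$ that we shall form. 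Because $L$ is the horizontal lift of $\Delta_{\bH H^n}$, it is the unique operator with $L(f\circ\psi)=(\Delta_{\bH H^n}f)\circ\psi$, and writing $\Delta_{\bH H^n}$ in the $w$-coordinates via \eqref{eq-metric} this reads $L=\sum_{i,k}h^{i\bar k}\,\partial_{w_i}^{\mathcal H}\,\partial_{\overline{w_k}}^{\mathcal H}+(\text{lift of the first-order part of }\Delta_{\bH H^n})$.

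Now apply $L$ to $F=F(r,\eta)$, where $r=\tanh^{-1}\rho$ is the geodesic distance on $\bH H^n$ from the origin and $\eta$ the geodesic distance from the identity on $\mathbf{SU}(2)\cong\mathbb S^3$: the base derivatives act through $\rho$ only and the Reeb fields $T_a$ through $\eta$ only. Expanding $\partial_{w_i}^{\mathcal H}\partial_{\overline{w_k}}^{\mathcal H}F$ and contracting with $h^{i\bar k}$, the outcome organizes into three groups: (a) terms involving only base derivatives, which together with the lifted first-order part recombine into the radial Laplace-Beltrami operator of $\bH H^n$, namely $\partial_r^2+(\Delta_{\bH H^n}r)\,\partial_r$ with $\Delta_{\bH H^n}r=(4n-1)\coth r+3\tanh r$ --- the mean curvature of the geodesic spheres of $\bH H^n$, whose shape operator has eigenvalue $\coth r$ with multiplicity $4n-4$ on the curvature $-1$ eigenspace and $\coth r+\tanh r=2\coth 2r$ with multiplicity $3$ on the curvature $-4$ eigenspace; (b) the terms $\big(\sum_{i,k}h^{i\bar k}\alpha_a(\partial_{w_i})\alpha_b(\partial_{\overline{w_k}})\big)\,T_aT_bF$; (c) cross terms proportional to $\partial_r\partial_\eta F$ and leftover first-order terms proportional to $\partial_\eta F$.

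For (b), a direct computation from \eqref{eq-metric} and \eqref{QKah}, using $h^{i\bar k}=2(1-\rho^2)(\delta_{ik}-\overline{w_i}w_k)$, $\rho^2=\sum_j|w_j|^2$ and $\rho=\tanh r$, yields $\sum_{i,k}h^{i\bar k}\alpha_a(\partial_{w_i})\alpha_b(\partial_{\overline{w_k}})=\tanh^2 r\,\delta_{ab}$; hence (b) equals $\tanh^2 r\sum_aT_a^2F=\tanh^2 r\,\Delta_{\mathbf{SU}(2)}F$, and since $\eta$ is the distance from the identity the radial part of the bi-invariant Laplacian on $\mathbf{SU}(2)\cong\mathbb S^3$ is $\partial_\eta^2+2\cot\eta\,\partial_\eta$. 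For (c): the $\partial_r\partial_\eta$-terms vanish because $\zeta$ annihilates the radial direction of $\bH H^n$ --- equivalently, the horizontal lift of that direction carries no $T_a$-component, as already used in passing from $\alpha$ to $\Lambda$ --- so the contraction of $h^{i\bar k}$ against one ``radial'' and one ``connection'' index vanishes; the leftover first-order $\partial_\eta$-terms cancel by direct computation (they must, since $L$, the radial operator of $\bH H^n$, and $\tanh^2 r\,\Delta_{\mathbf{SU}(2)}$ are each symmetric), the genuine $2\cot\eta\,\partial_\eta$ contribution being produced intrinsically by $\sum_aT_a^2$. Assembling (a)--(c) gives \eqref{radial-L}.

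The main obstacle is step (b): showing that the matrix $\sum_{i,k}h^{i\bar k}\alpha_a(\partial_{w_i})\alpha_b(\partial_{\overline{w_k}})$ collapses to exactly $\tanh^2 r\,\delta_{ab}$ requires carefully tracking the quaternionic non-commutativity hidden in $\zeta$ together with the rank-one K\"ahler correction $\overline{w_i}w_k/(1-\rho^2)^2$ in $h_{i\bar k}$; the rest is bookkeeping, and (c) is a lighter version of the same. As an independent check one may instead invoke $\square_{\mathbf{AdS}^{4n+3}(\mathbb{H})}=L-\Delta_{\mathbf{SU}(2)}$ and compute the radial part of the d'Alembertian, $\partial_r^2+((4n-1)\coth r+3\tanh r)\,\partial_r-\cosh^{-2}r\,(\partial_\eta^2+2\cot\eta\,\partial_\eta)$; since $1-\cosh^{-2}r=\tanh^2 r$, the two routes agree.
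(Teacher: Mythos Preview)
Your proof is correct and follows essentially the same strategy as the paper's: lift the base coordinate fields horizontally using the connection form, expand the sub-Laplacian, and separate base, fiber, and cross contributions. The organizational difference is that the paper lifts via the conjugated form $\Lambda$ to obtain $V_i=\partial_{w_i}+\frac{\overline{w_i}}{2(1-\rho^2)\cos^2\eta}\,\partial_\phi$ in the auxiliary coordinates $\phi_j=\frac{\tan\eta}{\eta}\theta_j$, computes $L$ explicitly as $\Delta_{\bH H^n}+\frac{\rho^2}{\cos^4\eta}\sum_j\partial_{\phi_j}^2+\text{(cross)}$, and only then performs the change of variable $\mu=\tan\eta$ to recover $\partial_\eta^2+2\cot\eta\,\partial_\eta$; you instead keep the Reeb fields $T_a$ throughout and invoke directly that $\sum_aT_a^2=\Delta_{\mathbf{SU}(2)}$ has radial part $\partial_\eta^2+2\cot\eta\,\partial_\eta$, which spares you that coordinate computation. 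Your observation that the $e^{\mathfrak q}$-conjugation rotates $(T_1,T_2,T_3)$ orthogonally and hence preserves $\sum_aT_a^2$ is exactly what makes this shortcut legitimate. The d'Alembertian cross-check you append is not in the paper's proof of this proposition but is the identity the paper uses later in Section~\ref{sec-kernel}, so it is a sound independent verification.

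One small caution: your symmetry argument for the vanishing of the residual first-order $\partial_\eta$ terms in (c) is only a heuristic (symmetry of a first-order piece with respect to the weighted measure does not by itself force it to vanish); the actual vanishing really does come from the direct computation you allude to, namely that $\zeta$ evaluated on the real radial field $\sum_i(w_i\partial_{w_i}+\overline{w_i}\partial_{\overline{w_i}})$ is zero, as you correctly state.
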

\begin{proof}
Lifting the vector fields $\partial/\partial w_i, 1 \leq i \leq n,$ by means of $\Lambda$, we first obtain a basis $(V_i)_{1 \leq i \leq n}$ of the horizontal bundle $H(\mathbf{AdS}^{4n+3}(\mathbb{H}))$: 
\begin{equation*}
V_i := \frac{\partial}{\partial w_i} + \frac{\overline{w_i}}{2(1-\rho^2)\cos^2\eta}\frac{\partial}{\partial \phi},
\end{equation*}
where 
\begin{equation*}
\phi=\frac{\tan \eta}{\eta}{\frak{q}}:= \phi_1I+\phi_2J+\phi_3K
\end{equation*}
and 
\begin{equation*}
\frac{\partial}{\partial \phi} = -\left( \frac{\partial}{\partial \phi_1} I + \frac{\partial}{\partial \phi_2} J + \frac{\partial}{\partial \phi_3}K\right).
\end{equation*}
Moreover, the inverse of the metric displayed in \eqref{eq-metric} is given by: 
\begin{equation*}
h^{\bar{i} k} = 2(1-\rho^2) (\delta_{ik} - \overline{w_i} w_k)
\end{equation*}
and the sublaplacian admits the following expression: 
\begin{align*}
L = 2\sum_{i,k=1}^n \Re(\overline{V_i}h^{\bar{i}k}V_k).   
\end{align*}
Straightforward computations then yield
\begin{align}\label{SubLapl}
L & = 4(1-\rho^2)\Re \left(\sum_{i=1}^n \frac{\partial^2}{\partial\overline{w_i}\partial w_i} - \overline{\mathcal{R}}\mathcal{R} - \frac{\rho^2}{4(1-\rho^2)\cos^4\eta} \left(\frac{\partial}{\partial \phi}\right)^2+ 
\frac{1}{2\cos^2\eta}\left(\overline{\mathcal{R}}\frac{\partial}{\partial \phi} - \frac{\partial}{\partial \phi}\mathcal{R}\right) \right) \nonumber 
\\& = \Delta_{\bH H^n}+\frac{\rho^2}{\cos^4\eta}\sum_{i=1}^3\frac{\partial^2}{\partial \phi_i^2} + \frac{2(1-\rho^2)}{\cos^2\eta} \left(\overline{\mathcal{R}}\frac{\partial}{\partial \phi} - \frac{\partial}{\partial \phi}\mathcal{R}\right), 
\end{align}
where 
\begin{equation*}
\Delta_{\bH H^n} =  4(1-\rho^2)\Re \left(\sum_{i=1}^n \frac{\partial^2}{\partial\overline{w_i}\partial w_i} - \overline{\mathcal{R}}\mathcal{R}\right)
\end{equation*}
is the Laplace-Beltrami operator of ${\bH H^n}$ and 
\begin{equation*}
\mathcal{R} = \sum_{i=1}^n w_i \frac{\partial }{\partial w_i}
\end{equation*}
is the quaternionic Euler operator. Denote $\mu^2 =\phi_1^2+\phi_2^2+\phi_3^2$, then $\mu=\tan \eta$ so that
\begin{equation*}
\frac{\partial}{\partial \eta}=\frac{1}{\cos^2\eta}\frac{\partial}{\partial \mu}.
\end{equation*}
As a result, for any smooth function $f$ of $\eta$,
\begin{align*}
&\frac{1}{\cos^4\eta}\sum_{i=1}^3\frac{\partial^2}{\partial \phi_i^2}f(\eta)=\frac{1}{\cos^4\eta}\left(\frac{\partial^2}{\partial \mu^2}+\frac{2}{\mu}\frac{\partial^2}{\partial \mu^2}\right)f(\eta)\\
&\quad\quad=\left(\frac{\partial^2}{\partial \eta^2}-\cos^2\eta\frac{\partial}{\partial \eta}\left(\frac{1}{\cos^2\eta} \right)\frac{\partial}{\partial \eta}+\frac{2\cot\eta}{\cos^2\eta}\frac{\partial}{\partial \eta} \right)f(\eta)\\
&\quad\quad=\left(\frac{\partial^2}{\partial \eta^2}+2\cot\eta\frac{\partial}{\partial \eta}  \right)f(\eta)
\end{align*}
Since the radial part of $\Delta_{\bH H^n}$ is given by
\[
\frac{\partial^2}{\partial r^2}+((4n-1)\coth r+3\tanh r)\frac{\partial}{\partial r^2}.
\]
and $\rho = \tanh(r)$, the proposition is proved.
\end{proof}

\section{The subelliptic heat kernel on the quaternionic anti de-Sitter space}\label{sec-kernel}

Recall from the previous section, the radial part of the horizontal Laplacian of the quaternionic anti de-Sitter fibration
\[
\mathbf{SU}(2)\to \mathbf{AdS}^{4n+3}(\mathbb{H})\to \bH H^n
\]
is given by
\begin{equation*}
 \frac{\partial^2}{\partial r^2}+((4n-1)\coth r+3\tanh r)\frac{\partial}{\partial r^2}+\tanh^2 r\left(\frac{\partial^2}{\partial \eta^2}+2\cot\eta\frac{\partial}{\partial \eta}  \right),
\end{equation*}
where $r \ge 0$ is the radial coordinate on the base space  $\bH H^n$ and $\eta \in [0,\pi]$ is the radial coordinate on $\mathbf{SU}(2)$. With a slight abuse of notation with respect to the previous section, this operator shall be denoted by $L$. Let $p_t(r,\eta)$ be its subelliptic heat kernel  (issued from $(0,0)$) with respect to the Riemannian volume measure 
\begin{equation*}
\nu (dr, d\eta) = \frac{8\pi^{2n+1}}{\Gamma(2n)}(\sinh r)^{4n-1}(\cosh r)^3(\sin\eta)^2drd\eta.
\end{equation*}

In this section, we shall derive an integral representation of the corresponding heat kernel $p_t(r, \eta)$ by two different methods.

\begin{itemize}
\item The first method is geometric and uses an analytic continuation in the fiber variables similar to the Wick rotations;
\item The second method is related to the heat kernel associated with the generalized Maass Laplacian (see e.g. \cite{Int-OM}). 
\end{itemize}

\subsection{First method: Complexification of $\mathbf{SU}(2)$ and Wick rotations}


\begin{theorem}\label{IntRep} 
Let $r \ge 0$ and $ \eta \in [0,\pi)$, then:
 \begin{align}\label{eq-IntRep}
 & p_t(r,\eta)\nonumber\\
 = &\frac2\pi \int_{0}^{\infty} \frac{\sinh u}{\sin \eta}  \left\{\sum_{m \geq 0} e^{-m(m+2)t} \sin[(m+1)\eta] \sinh[(m+1)u] \right\}q_{t, 4n+3}(\cosh r \cosh u) du \nonumber\\
 =&\frac{ e^{t}}{  \sqrt{\pi t}  }  \sum_{k\in\mathbb Z}\int_0^{+\infty} \frac{ \sinh y \sin \left(  \frac{ (\eta+2k\pi) y}{2t}\right) }{\sin \eta} e^{\frac{y^2- (\eta+2k\pi)^2}{4t}} q_{t,4n+3}( \cosh r\cosh y ) dy.
\end{align}
where
\begin{equation*}
q_{t, 4n+3}(\cosh s) := \frac{e^{-(2n+1)^2t}}{(2\pi)^{2n+1}\sqrt{4\pi t}}\left(-\frac{1}{\sinh(s)}\frac{d}{ds}\right)^{2n+1} e^{-s^2/(4t)},
\end{equation*} 
is the heat kernel on the real $4n+3$ dimensional real hyperbolic space.
\end{theorem}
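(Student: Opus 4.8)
The plan is to exploit the product structure of the radial operator $L$ and the fact that the $\eta$-dependence enters only through the elliptic operator $\Delta_{\mathbf{SU}(2)}^{\mathrm{rad}}=\partial_\eta^2+2\cot\eta\,\partial_\eta$, which is the radial part of the Laplace--Beltrami operator on $\mathbf{SU}(2)\cong\mathbb S^3$. First I would diagonalize this operator: its eigenfunctions are the zonal spherical functions $\eta\mapsto \frac{\sin[(m+1)\eta]}{(m+1)\sin\eta}$ with eigenvalue $-m(m+2)$, $m\ge 0$. Expanding $p_t(r,\eta)$ in this basis, $p_t(r,\eta)=\sum_{m\ge 0}c_m(m+1)\frac{\sin[(m+1)\eta]}{\sin\eta}\,g_m(t,r)$, and plugging into $\partial_t p_t=Lp_t$, the heat equation decouples into a family of one-dimensional radial equations in $r$:
\[
\partial_t g_m = \frac{\partial^2 g_m}{\partial r^2}+\big((4n-1)\coth r+3\tanh r\big)\frac{\partial g_m}{\partial r}-m(m+2)\tanh^2 r\; g_m.
\]
The key observation, as in \cite{BD}, is that the zero-order potential $-m(m+2)\tanh^2 r$ can be absorbed: writing $\tanh^2 r = 1-\frac{1}{\cosh^2 r}$ and performing a ground-state (Doob-type) transform, or comparing directly with the radial Laplacian on real hyperbolic space $\mathbb H^{4n+3}$, one recognizes that $e^{-m(m+2)t}$ times a suitable conjugate of the $\mathbb H^{4n+3}$ radial heat semigroup solves this equation. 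Concretely I expect $g_m(t,r)$ to be expressible through $q_{t,4n+3}(\cosh r\cosh u)$ integrated against the $\mathbf{SU}(2)$ heat kernel data.

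The cleanest route to make this precise is the ``Wick rotation'' / analytic continuation advertised in the introduction: the function $q_{t,4n+3}(\cosh r\cosh u)$ is the heat kernel on $\mathbb H^{4n+3}$ evaluated at a point whose distance squared decomposes, via the hyperbolic law of cosines, as a ``sum'' of a contribution $r$ from the base and a contribution $u$ which plays the role of an imaginary fiber coordinate ($u=i\eta$ morally, since $\cos$ becomes $\cosh$). So I would: (i) recall the explicit radial heat kernel $q_{t,4n+3}$ on $\mathbb H^{4n+3}$ and its defining PDE; (ii) verify by direct differentiation that the first displayed integral in \eqref{eq-IntRep}, call it $\Phi_t(r,\eta)$, satisfies $\partial_t\Phi_t = L\Phi_t$ — here one uses that $\big(-\frac{1}{\sinh u}\partial_u\big)$ hitting $q_{t,4n+3}(\cosh r\cosh u)$ interacts with the $r$-derivatives exactly so as to produce the $\tanh^2 r$ factor, together with the elementary identity that $\frac{\sinh u}{\sin\eta}\sin[(m+1)\eta]\sinh[(m+1)u]$ is, in $\eta$, an eigenfunction of $\partial_\eta^2+2\cot\eta\,\partial_\eta$ with eigenvalue $-m(m+2)$ and, in $u$, an eigenfunction of $\partial_u^2+2\coth u\,\partial_u$ with eigenvalue $m(m+2)$; (iii) check the initial condition $\Phi_t\to\delta_{(0,0)}$ as $t\to 0^+$ against the volume measure $\nu$, using that $\sum_{m\ge0}e^{-m(m+2)t}(m+1)\frac{\sin[(m+1)\eta]}{\sin\eta}$ is the $\mathbf{SU}(2)$ heat kernel and that $\int_0^\infty (\sinh u)\,(\text{kernel})\,q_{t,4n+3}(\cosh r\cosh u)\,du$ concentrates at $r=u=0$; (iv) invoke uniqueness of the subelliptic heat semigroup (essential self-adjointness, already established in the excerpt) to conclude $\Phi_t=p_t$.

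For the second equality I would replace the spectral sum $\sum_{m\ge 0}e^{-m(m+2)t}\sin[(m+1)\eta]\sinh[(m+1)u]$ by its closed form: writing $m(m+2)=(m+1)^2-1$, this sum is $e^t$ times $\sum_{m\ge1}e^{-m^2 t}\sin(m\eta)\sinh(mu)$, which is a classical theta-type series. Applying the Jacobi theta / Poisson summation identity (equivalently, the $\mathbb S^3$ heat-kernel resummation over the $\mathbb Z$-lattice of the universal cover $\mathbb R$) turns it into the Gaussian sum over $k\in\mathbb Z$ with the factor $e^{(y^2-(\eta+2k\pi)^2)/(4t)}\sin\big(\frac{(\eta+2k\pi)y}{2t}\big)/\sqrt{\pi t}$ appearing in the statement, after the change of variable $u\mapsto y$. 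The main obstacle I anticipate is step (ii)–(iii): justifying the termwise differentiation and the interchange of the infinite sum with the $u$-integral (the series $\sum e^{-m(m+2)t}\sinh[(m+1)u]$ converges only because $q_{t,4n+3}(\cosh r\cosh u)$ decays super-exponentially in $u$), and pinning down the small-time delta behavior with the correct normalization constant in $\nu$; the algebraic verification that the integrand solves $\partial_t = L$ is bookkeeping but must be done carefully because of the cross-terms between $\partial_r$ acting on $\cosh r\cosh u$ and the $\tanh^2 r$ coefficient.
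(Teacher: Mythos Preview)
Your proposal is correct and rests on the same geometric insight as the paper's first proof: the analytic continuation $\eta\mapsto iu$ sends $\Delta_{\mathbf{SU}(2)}^{\mathrm{rad}}$ to $-\Delta_P=-\big(\partial_u^2+2\coth u\,\partial_u\big)$, so that the $\mathbf{SU}(2)$ eigenfunction $\frac{\sin[(m+1)\eta]}{\sin\eta}$ with eigenvalue $-m(m+2)$ has as hyperbolic counterpart $\frac{\sinh[(m+1)u]}{\sinh u}$ with eigenvalue $+m(m+2)$, and $\cosh\delta=\cosh r\cosh u$ identifies the radial Laplacian of $H^{4n+3}$ with $\Delta_{\mathbb HH^n}+\frac{1}{\cosh^2 r}\Delta_P$. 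The paper, however, packages this as a semigroup factorization $e^{tL}=e^{t\Delta_{\mathbf{SU}(2)}}e^{t\,\square_{\mathbf{AdS}}}$ followed by the analytic continuation that replaces the $\square_{\mathbf{AdS}}$--kernel by $q_{t,4n+3}$; you instead propose a direct verification that the displayed integral solves $\partial_t=L$ (via the eigenvalue matching and two integrations by parts in $u$ against $(\sinh u)^2du$) together with the initial condition and uniqueness. Your route is arguably more self-contained, since it avoids giving meaning to the intermediate d'Alembertian heat kernel $p_t^{\square_{\mathbf{AdS}}}$ and its analytic extension. Your eigenfunction-expansion starting point is exactly that of the paper's second proof, but you bypass the appeal to the wave-kernel results of \cite{Int-OM} by going straight to the $H^{4n+3}$ heat kernel; this is a genuine simplification. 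The Poisson-summation step for the second equality is also what the paper does, invoking the two classical representations of the $\mathbf{SU}(2)$ heat kernel $s_t$. The only caution is the one you already flag: the boundary terms in the $u$--integration by parts vanish thanks to the Gaussian decay of $q_{t,4n+3}(\cosh r\cosh u)$, and this same decay is what makes the spectral sum--integral interchange legitimate; these points deserve a sentence in a full write-up.
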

%

\begin{proof}
The following computations are based on geometric ideas that we will describe after the proof. For conciseness, we omit some of the technical details since a second proof of the result will be given in the next section. We first decompose
\begin{equation*}
L = \frac{\partial^2}{\partial r^2}+((4n-1)\coth r+3\tanh r)\frac{\partial}{\partial r^2}+\tanh^2 r\left(\frac{\partial^2}{\partial \eta^2}+2\cot\eta\frac{\partial}{\partial \eta}  \right).
\end{equation*}
 as follows

\begin{equation*}
L = \Delta_{\bH H^n}+\tanh^2 r {\Delta}_{\mathbf{SU}(2)},
\end{equation*}

where 
\[
\Delta_{\bH H^n}=\frac{\partial^2}{\partial r^2}+((4n-1)\coth r+3\tanh r)\frac{\partial}{\partial r^2}
\]
 is the radial part of the Laplacian on the quaternionic hyperbolic space $\bH H^n$ and 
 \[
 \Delta_{\mathbf{SU}(2)}=\frac{\partial^2}{\partial \eta^2}+2\cot\eta\frac{\partial}{\partial \eta} 
 \]
is the radial part of the Laplacian on $\mathbf{SU}(2)$. Note that we can also write
\[
L={\square}_{\mathbf{AdS}^{4n+3}(\mathbb{H})}+{\Delta}_{\mathbf{SU}(2)},
\]
where
\[
{\square}_{\mathbf{AdS}^{4n+3}(\mathbb{H})}=\Delta_{\bH H^n}-\frac{1}{\cosh^2 r}{\Delta}_{\mathbf{SU}(2)}
\]
is the radial part of the d'Alembertian. Note $\Delta_{\bH H^n}$ and ${\Delta}_{\mathbf{SU}(2)}$ commute. Therefore
\begin{align*}
e^{tL}&=e^{t ({\square}_{\mathbf{AdS}^{4n+3}(\mathbb{H})}+{\Delta}_{\mathbf{SU}(2)})}\\
 &=e^{t {\Delta}_{\mathbf{SU}(2)}} e^{ t {\square}_{\mathbf{AdS}^{4n+3}(\mathbb{H})} }
\end{align*}
We deduce that the heat kernel of $L$ can be written as
\begin{align}\label{formula 1}
p_t(r,\eta)=\int_0^\pi s_t (\eta ,u) p_t^{{\square}_{\mathbf{AdS}^{4n+3}(\mathbb{H})}} (r,u)  (\sin u)^2 du,
\end{align}
where $s_t$ is the heat kernel of 
\[
 \Delta_{\mathbf{SU}(2)}=\frac{\partial^2}{\partial \eta^2}+2\cot\eta\frac{\partial}{\partial \eta} 
 \]
with respect to the measure $ \sin^2\eta d\eta$, $\eta\in[0,\pi)$,
 and $p_t^{{\square}_{\mathbf{AdS}^{4n+3}(\mathbb{H})}} (r,u) $ the heat kernel at $(0,0)$ of ${\square}_{\mathbf{AdS}^{4n+3}(\mathbb{H})}$ with respect to the measure 
 \[
 d\mu_{4n+3}(r,u)=\frac{8\pi^{2n+1}}{\Gamma(2n)}(\sinh r)^{4n-1}(\cosh r)^3(\sin u)^2drdu,\quad  r\in[0,\infty),\ u\in [0,\pi),
 \]
 The idea to compute \eqref{formula 1} is to perform an analytic extension in the fiber variable  $u$. More precisely,   let us consider the analytic change of variables $\tau : (r,\eta) \to (r,i\eta)$  that will be applied on functions of the type $f(r,\eta)=g(r) e^{-i\lambda u}$, with $g$ smooth and compactly supported on $[0,+\infty)$ and $\lambda >0$. One sees then  that
\begin{align}\label{polinh}
{\square}_{\mathbf{AdS}^{4n+3}(\mathbb{H})} (f\circ \tau)=(\Delta_{H^{4n+3}} f ) \circ \tau
\end{align}
where
\[
\Delta_{H^{4n+3}}=\Delta_{\bH H^n}+\frac{1}{\cosh^2 r}{\Delta}_{P}
\]
and
\[
\Delta_{P}=\frac{\partial^2}{\partial \eta^2}+2\coth \eta \frac{\partial}{\partial \eta}.
\]
Introducing a new variable $\delta$ such that $\cosh \delta= \cosh r \cosh \eta$, one sees after straightforward computations that
\[
\Delta_{H^{4n+3}}=\frac{\partial^2}{\partial \delta^2}+(4n+2)\coth\delta\frac{\partial}{\partial \delta}.
\]

Thus $\Delta_{H^{4n+3}}$ is the radial part of the Laplacian on the real hyperbolic space of dimension $4n+3$. One deduces
\begin{align*}
e^{tL} (f \circ \tau)&=e^{t {\Delta}_{\mathbf{SU}(2)}} e^{ t {\square}_{\mathbf{AdS}^{4n+3}(\mathbb{H})} } (f \circ \tau) \\
 &= e^{t {\Delta}_{\mathbf{SU}(2)}} ( (e^{t\Delta_{H^{4n+3}} }f) \circ \tau) \\
 &=(e^{-t \Delta_{P} }e^{t\Delta_{H^{4n+3}}} f ) \circ \tau.
\end{align*}

  Now, since for every  $f(r,\eta)=g(r) e^{-i\lambda u}$ 
 \[
( e^{ t {\square}_{\mathbf{AdS}^{4n+3}(\mathbb{H})} } f ) (0,0)=  (e^{t\Delta_{H^{4n+3}} }(f\circ  \tau^{-1})) (0,0),
 \]
 one deduces  that for a function $g$ depending only on $u$, namely $g(u)=e^{-i\lambda u}$
  \[
 \int_0^\pi g(u) p_t^{{\square}_{\mathbf{AdS}^{4n+3}(\mathbb{H})}} (r,u)  (\sin u)^2 du=\int_0^{+\infty} q_{t, 4n+3}(\cosh r \cosh u)g(-iu) (\sinh u)^2 du.
 \]

 Therefore, coming back to \eqref{formula 1}, one infers that using the analytic extension  of $s_t$ one must have
 \[
 \int_0^\pi s_t (\eta ,u) p_t^{{\square}_{\mathbf{AdS}^{4n+3}(\mathbb{H})}} (r,u)  (\sin u)^2 du=\int_0^{+\infty} q_{t, 4n+3}(\cosh r \cosh u) s_t (\eta,-iu) (\sinh u)^2 du.
 \]
 One concludes with the well known formulas (see Section 8.6 in  \cite{F})
 \begin{align}\label{eq-s-t}
 s_t (\eta,u) & =\frac{2}{\pi \sin \eta \sin u} \sum_{m \geq 0} e^{-m(m+2)t} \sin[(m+1)\eta] \sin[(m+1)u] \\
  &=\frac{e^{t}}{  \sqrt{\pi t}  \sin \eta \sin u } \sum_{k\in\Z} \sinh \left(  \frac{ (\eta+2k\pi) u}{2t}\right) e^{\frac{-u^2-(\eta+2k\pi)^2}{4t}}. \nonumber
 \end{align}
\end{proof}




We now briefly explain the geometric meaning of formula \eqref{polinh}. The idea behind the change of variable $\tau$  is similar to the idea of ``Wick rotation" in physics. More precisely,  consider the complexification of $\mathbf{SU}(2)$, which is given by $SL(2,\C)$, whose Lie algebra $\mathfrak{sl}(2,\C)$ consists of traceless complex matrices. A basis of $\mathfrak{sl}(2,\C)$ is given by
\begin{align*}
&I=\begin{pmatrix} 0 & i\\ i & 0
\end{pmatrix},
\quad 
J=\begin{pmatrix} 0 & 1\\ -1 & 0
\end{pmatrix},
\quad
K=\begin{pmatrix} -i & 0\\ 0 & i
\end{pmatrix}\\
&\mathfrak{I}=\begin{pmatrix} 0 & 1\\ 1 & 0
\end{pmatrix},
\quad 
\mathfrak{J}=\begin{pmatrix} 0 & -i\\ i & 0
\end{pmatrix},
\quad
\mathfrak{K}=\begin{pmatrix} -1 & 0\\ 0 & 1
\end{pmatrix}
\end{align*}
The Lie algebra of $\mathbf{SU}(2)$ consists of traceless complex anti-Hermitian matrices. A basis of $\mathfrak{su}(2)$ is given by $I, J, K$. Clearly we have
\begin{align*}
&[I, J]=2K,\quad [J,K]=2I,\quad [K, I]=2J.
\end{align*}
and 
\[
\mathfrak{sl}(2,\C)=\mathfrak{su}(2)\oplus i\cdot\mathfrak{su}(2).
\] 
Note that $SL(2,\C)$ is isomorphic (as a real analytic variety) to $\mathbf{SU}(2)\times \R^3$ (See Theorem 5, page 21 in \cite{SE}).
The Lie algebra of $SL(2,\R)$ consists of traceless real matrices and a basis is given by $\mathfrak{I}, J, \mathfrak{K}$ and 
\[
[\mathfrak{I}, J]=2\mathfrak{K},\quad [J, \mathfrak{K}]=2\mathfrak{I},\quad [\mathfrak{K}, \mathfrak{I}]=-2J.
\]

Denote $\mathfrak{p}:=i\cdot \mathfrak{su}(2)$ and $P=SL(2, \C)/ \mathbf{SU}(2)$. 
%
To geometrically describe $P$, we recall the complexification of  $\mathbf{SU}(2)\cong S^3$. Let $Q^3:=\{z=(z_1,z_2, z_3, z_4)\in\C^4, z_1^2+z_2^2+z_3^2+z_4^2=1\}$ be the complexification of $S^3$. It is well known that $Q^3$ can be considered as a tangent bundle $T(S^3):=\{(x,y)|x\in \R^4, y\in\R^4, |x|=1, x\cdot y=0\}$ over $S^3$, using the map $T(S^3)\to Q^3$,
\[
(x,y)\to \cosh |y|x+i\frac{\sinh |y|}{|y|}y.
\]
Take a metric tensor on $S^3$, we analytically continue it to $Q^3$ and then restrict to the fibers. It then gives a hyperbolic space $H^3:=\{y_1^2+y_2^2+y_3^2+y_4^2=-1\}$ with metric $ds^2=-dy_1^2-dy_2^2-dy_3^2-dy_4^2$. 

As a consequence, the complexification of $\mathbf{SU}(2)$ can be considered as fiber bundle over $S^3$ with fibers being real hyperbolic space $H^3$ and the symmetric space  $P=SL(2, \C)/ \mathbf{SU}(2)$ is therefore isometric to $H^3$.

For any analytic function $f(\theta_1, \theta_2, \theta_3)\in L^2(\mathbf{SU}(2))$, we consider its extension to holomorphic functions $f_{\C}(\theta_1+i\mu_1, \theta_2+i\mu_2, \theta_3+i\mu_3)$ on $\mathbf{SU}(2)_\C\cong SL(2, \C)$. 
It is then easy to see that
\[
(I^2+J^2+K^2) f_{\C}=-(\mathfrak{I}^2+\mathfrak{J}^2+\mathfrak{K}^2) f_{\C}
\]  
Hence the push forward of $\Delta_{\mathbf{\mathbf{SU}(2)}}$ through Wick's rotations gives $-\Delta_P$ where $\Delta_P:=\mathfrak{I}^2+\mathfrak{J}^2+\mathfrak{K}^2$ is the Laplacian on the symmetric space $P=H^3=SL(2, \C)/ \mathbf{SU}(2)$.  Formula  \eqref{polinh} indicates then that the $4n+3$ real dimensional real hyperbolic space is dual to $\mathbf{AdS}^{4n+3}(\mathbb{H})$ by  complexification of the fibers.

We point out that however, there is no (pseudo-) Riemannian submersion from the real hyperbolic space $H^{4n+3}$ to the symmetric space $\mathbb{H}H^n$. To obtain a fibration structure, one can consider the rotation
\[
(I, J, K)\to (\mathfrak{I},J, \mathfrak{K})
\]
The new basis $(J, \mathfrak{K}, \mathfrak{I})$ satisfies 
\[
J^2=-1, \quad \mathfrak{I}^2=\mathfrak{K}^2=1,\quad J\mathfrak{K}=\mathfrak{I},\quad \mathfrak{K}\mathfrak{I}=-J,\quad \mathfrak{I}J=\mathfrak{K}
\]
and form a  split-quaternion algebra $\B$. The resulting space is then a  para-quaternionic hyperbolic space. The quaternionic anti-de Sitter fibration then becomes a pseudo-Riemannian submersion (see \cite{BA})
\[
H^3_1\to H^{4n+3}_{2n+1}\to \B H^n
\] 
with totally geodesic fibers $H^3_1:=\{y_1^2+y_2^2+y_3^2-y_4^2=-1\}$, where 
\[
 H^{4n+3}_{2n+1}=Sp(1,n,\B)/Sp(n, \B),\quad \B H^n=Sp(1,n,\B)/Sp(1,\B)Sp(n,\B).
 \]
Similar rotation can be considered in the complex case as well. In \cite{W}, the author attempted to obtain a real hyperbolic space  by Wick rotating the $\bS^1$ fiber of the complex anti de-Sitter fibration. The correct rotation should be 
$i\to j$,
where $j$ is the Lorentz number that satisfies $j^2=1$ and generates a para-complex algebra $\A$. The complex anti-de Sitter fibration then becomes a pseudo-Riemannian submersion 
\[
H^1\to H^{2n+1}_n\to \A H^n
\]
with totally geodesic fiber $H^1=\{t=x+jy\in\A, t\overline{t}=1, x>0\}$, where 
 \[
 H^{2n+1}_n=SU(n,1,\A)/SU(n,\A),\quad \A H^n=SU(n,1,\A)/S(U(1,\A)U(n,\A)).
 \]



\subsection{Second method}

The strategy of the second method is similar to the one used in \cite{BD} and appeal to some results proved in \cite{Int-OM}.


\begin{proof}
Firstly, we decompose the subelliptic heat kernel in the basis of Chebyshev polynomials of the second kind 
\begin{equation*}
U_m(\cos(\eta)) = \frac{\sin((m+1)\eta)}{\sin(\eta)}, \quad m \geq 0, 
\end{equation*}
which are eigenfunctions of $\Delta_{\mathbf{SU}(2)}$: 
\begin{equation*}
\left(\frac{\partial^2}{\partial \eta^2}+2\cot\eta\frac{\partial}{\partial \eta}\right) U_m(\cos(\cdot))(\eta) = -m(m+2) U_m(\cos(\eta)), \quad m \geq 0.
\end{equation*}
Accordingly, 
\begin{equation*}
p_t(r,\eta) = \sum_{m \geq 0} f_m(t,r) U_m(\cos(\eta))
\end{equation*}
where for each $m$, $f_m(t,\cdot)$ solves the following heat equation:
\begin{align*}
\partial_t (f_m)(t,r) & =  \left\{\frac{\partial^2}{\partial r^2}+((4n-1)\coth r+3\tanh r)\frac{\partial}{\partial r^2} -m(m+2) \tanh^2 r\right\}(f_m)(t,r) 
\\& = \left\{\frac{\partial^2}{\partial r^2}+((4n-1)\coth r+3\tanh r)\frac{\partial}{\partial r^2} +\frac{m(m+2)}{\cosh^2 r} -m(m+2) \right\}(f_m)(t,r). 
\end{align*}
Up to a constant $(-m(m+2) - (2n+1)^2)$, the operator 
\begin{equation*}
\mathcal{L}_m := \frac{\partial^2}{\partial r^2}+((4n-1)\coth r+3\tanh r)\frac{\partial}{\partial r^2} +\frac{m(m+2)}{\cosh^2 r} + (2n+1)^2
\end{equation*}
 is an instance of the one considered in \cite{Int-OM} and denoted there $L_{2n}^{\alpha \beta}$ with 
\begin{equation*}
\alpha = 1+\frac{m}{2}, \qquad \beta = - \frac{m}{2}. 
\end{equation*}
From Theorem 2 in \cite{Int-OM}, we deduce that the solution to the wave Cauchy problem associated with the subelliptic Laplacian is given by 
\begin{equation*}
\cos(s\sqrt{-\mathcal{L}_m})(f)(w) = -\frac{\sinh(s)}{(2\pi)^{2n}} \left(\frac{1}{\sinh s}\frac{d}{ds}\right)^{2n} \int_{\mathbb{H}H^n} K_m(s,w,y)f(y)\frac{dy}{(1-||y||^2)^{2n+2}}  
\end{equation*}
where $f \in C_0^{\infty}(\mathbb{H}H^n)$,
\begin{multline*}
K_m(s,w,y) = \frac{(1-\overline{\langle w,y \rangle})^{1+m/2}}{(1-\langle w, y\rangle)^{-m/2}} \frac{\sqrt{\cosh^2(s) - \cosh^2(d(w,y))_+}}{\cosh^2(d(w,y))} 
\\ {}_2F_1\left(m+2, -m, \frac{3}{2}; \frac{\cosh(d(w,y)) - \cosh(s)}{2\cosh(d(w,y))}\right),
\end{multline*}
 ${}_2F_1$ is the Gauss hypergeometric function, and $dy$ stands for the Lebesgue measure in $\mathbb{C}^n$. Using the spectral formula  
\begin{equation*}
e^{tL} = \frac{1}{\sqrt{4\pi t}}\int_{\mathbb{R}} e^{-s^2/(4t)} \cos(s\sqrt{-L}) ds,
\end{equation*}
holding true for any non positive self-adjoint operator, we deduce that the solution to the heat Cauchy problem associated with $\mathcal{L}_m$ is given by: 
\begin{multline*}
e^{t\mathcal{L}_m}(f)(w)  = - \frac{e^{-m(m+2)t-(2n+1)^2t}}{\sqrt{4\pi t}(2\pi)^{2n}}  \int_{\mathbb{R}}ds\sinh(s) e^{-s^2/(4t)} \left(\frac{1}{\sinh s}\frac{d}{ds}\right)^{2n+1} \\ \int_{\mathbb{H}H^n} K_m(s,w,y)f(y)\frac{dy}{(1-||y||^2)^{2n+2}}.
\end{multline*}
Performing $2n+1$ integration by parts in the outer integral we further get: 
\begin{multline*} 
e^{t\mathcal{L}_m}(f)(w) = -\frac{e^{-m(m+2)t-(2n+1)^2t}}{\sqrt{4\pi t}(2\pi)^{2n}}  \int_{\mathbb{R}}ds\sinh(s) \left(\frac{1}{\sinh s}\frac{d}{ds}\right)^{2n+1} e^{-s^2/(4t)} \\ \int_{\mathbb{H}H^n} K_m(s,w,y)f(y)\frac{dy}{(1-||y||^2)^{2n+2}} 
= -\frac{e^{-m(m+2)t-(2n+1)^2t}}{\sqrt{4\pi t}(2\pi)^{2n}}  \int_{\mathbb{H}H^n}f(y)\frac{dy}{(1-||y||^2)^{2n+2}}\\ \int_{\mathbb{R}}ds \sinh(s) K_m(s,w,y) \left(\frac{1}{\sinh s}\frac{d}{ds}\right)^{2n+1} e^{-s^2/(4t)}
= 2\frac{e^{-m(m+2)t-(2n+1)^2t}}{\sqrt{4\pi t}(2\pi)^{2n}}  \int_{\mathbb{H}H^n}f(y)\frac{dy}{(1-||y||^2)^{2n+2}}\\ \int_{d(w,y)}^{\infty}d(\cosh(s))K_m(s,w,y) \left(-\frac{1}{\sinh s}\frac{d}{ds}\right)^{2n+1} e^{-s^2/(4t)}
\end{multline*}
 Recalling the heat kernel on the hyperbolic space $H^{4n+3}$:
\begin{equation*}
q_{t, 4n+3}(\cosh s) := \frac{e^{-(2n+1)^2t}}{(2\pi)^{2n+1}\sqrt{4\pi t}}\left(-\frac{1}{\sinh(s)}\frac{d}{ds}\right)^{2n+1} e^{-s^2/(4t)},
\end{equation*} 
we get
\begin{multline*}
e^{t\mathcal{L}_m}(f)(0) = 4\pi e^{-m(m+2)t}  \int_{\mathbb{H}H^n}f(y)\frac{dy}{(1-||y||^2)^{2n+2}}\\ \int_{d(0,y)}^{\infty}d(\cosh(s))K_m(s,0,y) q_{t, 4n+3}(\cosh(s)).
\end{multline*}
As a result, the subelliptic heat kernel of $\mathcal{L}_m$ reads
\begin{multline*}
\frac{dy}{(1-||y||^2)^{2n+2}}\\ \int_{d(0,y)}^{\infty}d(\cosh(s))K_m(s,0,y) q_{t, 4n+3}(\cosh(s)) = dr \cosh^3(r)\sinh^{4n-1}(r) \\ \int_{r}^{\infty}d(\cosh(s))K_m(s,0,y) q_{t, 4n+3}(\cosh(s)).
\end{multline*}
Performing the variable change $\cosh(s) = \cosh(r)\cosh(u)$ for $u \geq 0$, we transform the last expression into 
\begin{multline*}
dr \cosh^3(r)\sinh^{4n-1}(r) \\ \int_{0}^{\infty} \sinh^2(u) {}_2F_1\left(m+2, -m, \frac{3}{2}; \frac{1- \cosh(u)}{2}\right) q_{t, 4n+3}(\cosh(r)\cosh(u)) du.
\end{multline*}
Since 
\begin{equation*}
(m+1) {}_2F_1\left(m+2, -m, \frac{3}{2}; \frac{1- \cosh(u)}{2}\right) = \frac{\sinh[(m+1)u]}{\sinh(u)},
\end{equation*}
we finally recover the first integral representation displayed in Theorem \ref{IntRep}. 
\end{proof}


\subsection{Relation to the horizontal heat kernel of the complex anti de-Sitter fibration}

In $\mathbb{C}^{2n+1}, n \geq 1$, we consider the signed quadratic form: 
\begin{equation*}
||(z_1, \dots, z_{2n+1})||^2_H = \sum_{i=1}^{2n} |z_i|^2 - |z_{2n+1}|^2. 
\end{equation*}
Then, the  complex anti de-Sitter space $\mathbf{AdS}^{4n+1}(\mathbb{C})$  is the quadric defined by 
\begin{equation*}
\{z = (z_1, \dots, z_{n+1}) \in \mathbb{C}^{n+1}, ||z||_H = -1\},
\end{equation*}
and the circle group $U(1)$ acts on it by $z \mapsto ze^{i\theta}$. This action gives rise to the AdS fibration over the complex hyperbolic space $\mathbb{C}H_n$ with $U(1)$-fibers. 

\begin{proposition}
Let $p_t^{\C}(r,\eta)$ be the subelliptic heat kernel on the $4n+1$-dimensional  complex anti-de Sitter space as defined in \cite{W} and \cite{BD}. Then
\[
-\frac{e^{-4nt}}{2\pi\cosh r\sin\eta}\frac{\partial}{\partial\eta}p_t^{\C}(r, \eta)=p_t(r,\eta).
\]
\end{proposition}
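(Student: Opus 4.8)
The identity should emerge from matching the explicit integral representations of $p_t$ and $p_t^{\C}$, and the structural reason behind it is a first-order intertwining (Darboux-type) relation, worth recording first. Recall from \cite{BD,W} that the radial part of the horizontal Laplacian on the $(4n+1)$-dimensional complex anti de-Sitter space $\mathbf{AdS}^{4n+1}(\mathbb{C})$ is
\[
L^{\C}=\frac{\partial^2}{\partial r^2}+((4n-1)\coth r+\tanh r)\frac{\partial}{\partial r}+\tanh^2 r\,\frac{\partial^2}{\partial \eta^2}
\]
(its base is a complex hyperbolic space of real dimension $4n$, just like $\bH H^n$, while its $U(1)$-fibre contributes only the term $\partial^2_\eta$), and that $p_t^{\C}$ is its subelliptic heat kernel issued from $(0,0)$. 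Put
\[
M:=\frac{1}{\cosh r\,\sin\eta}\,\frac{\partial}{\partial\eta},\qquad v(t,r,\eta):=-\frac{e^{-4nt}}{2\pi}\,M\,p_t^{\C}(r,\eta);
\]
the proposition is the assertion that $v=p_t$.

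First I would establish the operator identity
\[
M\circ L^{\C}=(L+4n)\circ M,
\]
$L$ being the operator \eqref{radial-L}. Writing $Mf=h(r)k(\eta)f_\eta$ with $h=1/\cosh r$, $k=1/\sin\eta$ and expanding both sides, the third-order terms $f_{rr\eta}$ and $f_{\eta\eta\eta}$ match automatically; the coefficient of $f_{r\eta}$ matches because $2h'/h=-2\tanh r$ converts the drift $3\tanh r$ of $L$ into the drift $\tanh r$ of $L^{\C}$; the coefficient of $f_{\eta\eta}$ on the $L$-side vanishes since $k'/k=-\cot\eta$ annihilates the term $2\cot\eta\,\partial_\eta$; and the coefficient of $f_\eta$ equals $h''/h+((4n-1)\coth r+3\tanh r)\,h'/h+\tanh^2 r\,(k''/k+2\cot\eta\,k'/k)=(-\tanh^2 r-4n)+\tanh^2 r=-4n$, which is compensated exactly by the shift $+4n$. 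Granting this, if $u$ solves $\partial_t u=L^{\C}u$ then $\partial_t(e^{-4nt}Mu)=e^{-4nt}\big((L+4n)Mu-4n\,Mu\big)=L(e^{-4nt}Mu)$; applied to $u=p_t^{\C}$, this shows $v$ solves $\partial_t v=Lv$, hence $v=p_t$ once the two have the same initial data.

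The cleanest way to conclude — and one that settles the initial data and all multiplicative constants at once — is to insert the explicit formulas. Differentiating in $\eta$ the representation of $p_t^{\C}$ obtained in \cite{BD,W}, which has the same structure as \eqref{eq-IntRep} with $\mathbf{SU}(2)$ replaced by $U(1)$ and $q_{t,4n+3}$ by $q_{t,4n+1}$, and pairing the terms $\pm k$ in the fibre sum, yields (up to an explicit constant) an integral of the form $\int_0^\infty\big(\sum_{k\ge1}k\,e^{-k^2t}\sin(k\eta)\cosh(ku)\big)\,q_{t,4n+1}(\cosh r\cosh u)\,du$. On the other side, in the first representation of Theorem \ref{IntRep} write $m(m+2)=(m+1)^2-1$, use the dimensional recursion
\[
q_{t,4n+3}(\cosh r\cosh u)=\frac{e^{-(4n+1)t}}{2\pi\cosh r}\Big(-\frac1{\sinh u}\frac{\partial}{\partial u}\Big)q_{t,4n+1}(\cosh r\cosh u),
\]
which is immediate from the definition of $q_{t,N}$ and the chain rule for $\cosh\delta=\cosh r\cosh u$, and integrate once by parts in $u$. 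The boundary term vanishes because the fibre factors $\sinh[(m+1)u]$ vanish at $u=0$ while $q_{t,4n+1}$ decays super-exponentially; the integration by parts turns $\sinh u\,\sinh[(m+1)u]$ against $q_{t,4n+3}$ into $(m+1)\cosh[(m+1)u]$ against $q_{t,4n+1}$; and the prefactors combine as $e^{t}\cdot e^{-(4n+1)t}=e^{-4nt}$. Comparing the two resulting integrals, and tracking the constants, gives precisely $p_t(r,\eta)=-\dfrac{e^{-4nt}}{2\pi\cosh r\sin\eta}\,\partial_\eta p_t^{\C}(r,\eta)$.

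The main obstacle is the bookkeeping of constants and reference measures rather than anything conceptual: one must pin down the exact normalization of the representation of $p_t^{\C}$ borrowed from \cite{BD,W} (equivalently, settle the initial-data step for $v$ by a small-time analysis of $Mp_t^{\C}$ near the diagonal, where $-\tfrac1{\sin\eta}\partial_\eta$ applied to the Gaussian fibre profile $e^{-\eta^2/(4t)}$ produces the compensating factor $\sim 1/(2t)$). The remaining analytic points — differentiation under the integral sign, term-by-term treatment of the $m$-series, and the vanishing of the boundary term — are routine thanks to the rapid decay of $q_{t,N}$.
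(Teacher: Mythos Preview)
Your argument is correct. It differs from the paper's in two ways worth noting. First, you add a structural explanation that the paper does not give: the first-order intertwining relation $M\circ L^{\C}=(L+4n)\circ M$ with $M=(\cosh r\,\sin\eta)^{-1}\partial_\eta$, which you verify termwise, shows \emph{a priori} that $e^{-4nt}Mp_t^{\C}$ solves the quaternionic heat equation and identifies the exponential shift $e^{-4nt}$ as the spectral gap $4n$ between the two operators. This is a genuine Darboux-type observation and is more informative than a bare comparison of formulas. Second, for the actual identification you work with the \emph{first} (fibre-Fourier-series) representations on both sides: you rewrite $p_t$ via the recursion $q_{t,4n+3}=\dfrac{e^{-(4n+1)t}}{2\pi\cosh r}\Big(-\dfrac{1}{\sinh u}\partial_u\Big)q_{t,4n+1}$ and integrate by parts in $u$, turning $\sinh u\,\sinh[(m+1)u]$ against $q_{t,4n+3}$ into $(m+1)\cosh[(m+1)u]$ against $q_{t,4n+1}$; with $k=m+1$ and $m(m+2)=k^2-1$ this matches, up to the factor $-\tfrac{e^{-4nt}}{2\pi\cosh r\sin\eta}$, the series you obtain by differentiating the $U(1)$-fibre expansion of $p_t^{\C}$. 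The paper instead uses the \emph{second} representation of $p_t^{\C}$ (the complex-shifted integral from \cite{W}) and applies the same dimensional recursion directly inside the integrand, so that $\partial_\eta q_{t,4n+1}(\cosh r\cosh(y+i\eta))$ immediately produces $q_{t,4n+3}$; no integration by parts is needed. Both routes hinge on the same recursion for the real-hyperbolic heat kernels; yours is slightly longer but makes the mechanism more transparent and supplies the operator-level reason behind the identity. The only point to be careful about, which you flag yourself, is pinning down the exact normalization of the $p_t^{\C}$ formula taken from \cite{BD,W}; once that is fixed, your constants check out.
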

\begin{proof}
From \cite{W} we know that for $\eta\in[-\pi,\pi]$
\[
p_t^{\C}(r, \eta)=\frac{1}{\sqrt{4\pi t}}\sum_{k\in \Z}\int_{-\infty}^{+\infty}e^{\frac{y^2}{4t} }q_t^{4n+1}(\cosh r\cosh (y+i \eta+2k\pi i))dy
\]
where $q_t^{4n+1}$ is the Riemannian heat kernel of the $4n+1$-dimensional hyperbolic space. It is well known that (for instance, see \cite{DM})
\[
q_t(x)=-\frac{e^{-(4n+1)t}}{2\pi}\frac{d}{dx}q_t^{4n+1}(x)
\]
hence we can easily obtain that
\begin{align*}
\frac{\partial}{\partial\eta}p_t^{\C}(r, \eta)&=\frac{1}{\sqrt{4\pi t}} \sum_{k\in \Z}\int_{-\infty}^{+\infty}e^{\frac{y^2}{4t} }\frac{\partial}{\partial\eta} q_t^{4n+1}(\cosh r\cosh (y+i \eta+2k\pi i))dy\\
&=-\frac{2\pi e^{(4n+1)t}\cosh r}{\sqrt{4\pi t}} \sum_{k\in \Z}\int_{-\infty}^{+\infty}e^{\frac{y^2-(\eta+2k\pi)^2}{4t} }\sinh \frac{(\eta+2k\pi) y}{2t}\sinh y\, q_t^{4n+3}(\cosh r\cosh y)dy\\
&=-{2\pi e^{(4n+1)t}\cosh r}\,p_t(r,\eta)
\end{align*}
\end{proof}

\subsection{Small-time asymptotics of the subelliptic heat kernel}
In this section we analyze the short time asymptotic behaviors of the subelliptic kernel. First from \eqref{eq-IntRep} we obtain that the dominating term is $k=0$ as $t\to0$. This can be seen from the second expression of \eqref{eq-s-t} that for $u, \eta \in[0,\pi)$, 
\[
 s_t (\eta,u) =\frac{e^{t}\sinh\frac{\eta u}{2t}}{  \sqrt{\pi t}  \sin \eta \sin u } e^{\frac{-(u^2+\eta^2)}{4t}}  \left(1+O(e^{-C/t}) \right) 
\]
for some constant $C>0$. Hence when $t\to0$, we have that
\begin{equation}\label{eq-pt-leading}
 p_t(r,\eta) =\frac{ e^{t}}{  \sqrt{\pi t}  }  \int_0^{+\infty} \frac{ \sinh y \sin \left(  \frac{ \eta y}{2t}\right) }{\sin \eta} e^{\frac{y^2- \eta^2}{4t}} q_{t,4n+3}( \cosh r\cosh y ) dy+O(e^{-C/t}).
\end{equation}
Before the estimates, let us recall the Riemannian heat kernel $q_t$ has the small time asymptotic
\begin{align}\label{eq9}
q_{t,4n+3}(\cosh\delta')=\frac{1}{(4\pi t)^{2n+\frac{3}{2}}}\left(\frac{\delta'}{\sinh\delta'}\right)^{2n+1}e^{-\frac{\delta'^2}{4t}}\left(1+\left((2n+1)^2-\frac{(2n+1)2n(\sinh\delta'-\delta'\cosh\delta')}{\delta'^2\sinh\delta'}\right)t+O(t^2)\right).
\end{align}
where $\delta' \in [0,\infty)$  is the Riemannian distance and $\cosh \delta'=\cosh r\cosh y$.
\begin{prop}
When $t\to 0$, we have
\[
p_t(0,0)=\frac{1}{(4\pi t)^{2n+3}}(A_n+B_nt+O(t^2)),
\]
where $A_n=4\pi\int_0^{+\infty}\frac{y^{2n+2}}{(\sinh y)^{2n}}dy$ and $B_n=4\pi\int_0^{+\infty}\frac{y^{(2n+2)}}{(\sinh y)^{2n}}\left(4n^2+4n+2-\frac{2n(2n+1)(\sinh y-y\cosh y)}{y^2\sinh y}\right)dy$.
\end{prop}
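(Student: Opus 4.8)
The plan is to evaluate the leading‑order representation \eqref{eq-pt-leading} at $r=\eta=0$ and then feed in the short‑time expansion \eqref{eq9} of the real hyperbolic heat kernel. Letting $\eta\to0$ in \eqref{eq-pt-leading}, using $\lim_{\eta\to0}\sin(\eta y/(2t))/\sin\eta=y/(2t)$ together with the smoothness of $p_t$ and dominated convergence to pass the limit inside the integral, one gets
\[
p_t(0,0)=\frac{e^t}{2t\sqrt{\pi t}}\int_0^{+\infty} y\,\sinh y\,e^{y^2/(4t)}\,q_{t,4n+3}(\cosh y)\,dy+O(e^{-C/t}).
\]
The crucial point of setting $r=0$ is that the Riemannian distance $\delta'$ in \eqref{eq9} then satisfies $\cosh\delta'=\cosh y$, i.e. $\delta'=y$, so that the Gaussian $e^{-\delta'^2/(4t)}$ cancels the factor $e^{y^2/(4t)}$ exactly; this is what makes the remaining integral harmless and amenable to a term‑by‑term expansion in $t$.

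Next I would substitute \eqref{eq9} with $\delta'=y$. Using the identity $y\,\sinh y\,(y/\sinh y)^{2n+1}=y^{2n+2}/(\sinh y)^{2n}$ and the cancellation of the exponentials, this turns the previous display into
\[
p_t(0,0)=\frac{e^t}{2t\sqrt{\pi t}\,(4\pi t)^{2n+3/2}}\int_0^{+\infty}\frac{y^{2n+2}}{(\sinh y)^{2n}}\Big(1+c_n(y)\,t+O(t^2)\Big)\,dy+O(e^{-C/t}),
\]
where $c_n(y)=(2n+1)^2-\frac{2n(2n+1)(\sinh y-y\cosh y)}{y^2\sinh y}$. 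The scalar prefactor then simplifies: since $\sqrt{\pi t}\,(4\pi t)^{2n+3/2}=4^{2n+3/2}(\pi t)^{2n+2}$, one has $2t\sqrt{\pi t}\,(4\pi t)^{2n+3/2}=4^{2n+2}(\pi t)^{2n+2}t=(4\pi t)^{2n+3}/(4\pi)$, so the prefactor equals $4\pi\,e^t/(4\pi t)^{2n+3}$.

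Finally, expanding $e^t=1+t+O(t^2)$ and collecting powers of $t$ gives
\[
p_t(0,0)=\frac{1}{(4\pi t)^{2n+3}}\Big(A_n+B_n t+O(t^2)\Big),
\]
with $A_n=4\pi\int_0^{+\infty}\frac{y^{2n+2}}{(\sinh y)^{2n}}\,dy$ and $B_n=4\pi\int_0^{+\infty}\frac{y^{2n+2}}{(\sinh y)^{2n}}\big(c_n(y)+1\big)\,dy$; since $(2n+1)^2+1=4n^2+4n+2$, the latter is exactly the constant in the statement. Both integrals converge absolutely for $n\ge1$ because the integrand is $O(y^2)$ near $0$ and decays like $y^{2n+2}e^{-2ny}$ at infinity.

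The main obstacle is making the term‑by‑term expansion rigorous: one needs the $O(t^2)$ remainder in \eqref{eq9} to be controlled uniformly for $y\in(0,\infty)$ — in particular for large $y$, where $\delta'=y\to\infty$ — by a function integrable against $y^{2n+2}/(\sinh y)^{2n}$, so that $\int_0^{+\infty}$ and the expansion may be interchanged. One should also record that the $k\neq0$ terms dropped in passing from \eqref{eq-IntRep} to \eqref{eq-pt-leading}, as well as the $O(e^{-C/t})$ errors coming from \eqref{eq-s-t}, are smaller than any power of $t$ and hence absorbed into the $O(t^2)$ remainder, and that interchanging the limit $\eta\to0$ with the integral is justified by the same uniform integrability. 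The rest is bookkeeping.
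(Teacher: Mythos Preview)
Your argument is correct and follows essentially the same route as the paper: evaluate \eqref{eq-pt-leading} at $r=\eta=0$, substitute the hyperbolic heat-kernel expansion \eqref{eq9} with $\delta'=y$, and simplify the prefactor to $4\pi e^t/(4\pi t)^{2n+3}$. You are in fact more careful than the paper in tracking where the extra $+1$ in $4n^2+4n+2=(2n+1)^2+1$ comes from (the expansion of $e^t$), in checking convergence of the integrals, and in flagging the need for uniform control of the $O(t^2)$ remainder in \eqref{eq9}; the paper's proof simply plugs in and reads off the answer.
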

\begin{proof}
From \eqref{eq-pt-leading} we know  that
\begin{eqnarray*}
p_t(0,0)= \frac{e^{t}}{\sqrt{\pi t}}\int_{0}^{+\infty}\sinh y\cdot e^{\frac{y^2}{4t}}\cdot\frac{y}{2t} q_{t,4n+3}(\cosh y)dy.
\end{eqnarray*}
Plug in \eqref{eq9}, we have that
\[
p_t(0,0)= \frac{4\pi e^{t}}{(4\pi t)^{2n+3}}\int_{-\infty}^{+\infty}\frac{y^{2n+2}}{(\sinh y)^{2n}}\left(1+\left((2n+1)^2-\frac{2n(2n+1)(\sinh y-y\cosh y)}{y^2\sinh y}\right)t\right)dy.
\]
Hence the claimed estimates.
\end{proof}

The small time behavior of the subelliptic heat kernel on the vertical cut-locus, namely the points $(0,\eta)$ that can be achieved by flowing along vertical vector fields is quite different. A short-cut  to its estimate is by differentiating the small time estimate of  $p_t^{\C}(0,\eta)$.
\begin{prop}\label{asy-cut}
For $ \eta\in(0,\pi)$, $t\rightarrow 0$,
\[
p_t(0, \eta)=\frac{1}{4\pi\sin \eta\,{2^{6n}t^{4n+1}(2n-1)!}}\left( (\pi + \eta) \eta^{2n-1}e^{-\frac{2\pi \eta+ \eta^2}{4t}}   \right)(1+O(t)).
\]
\end{prop}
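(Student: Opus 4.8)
The plan is to derive the estimate from the known small-time behaviour of the horizontal heat kernel $p_t^{\C}$ of the complex anti-de Sitter fibration, using the differential identity proved just above. Specializing that identity to $r=0$ (where $\cosh r=1$) gives
\[
p_t(0,\eta)=-\frac{e^{-4nt}}{2\pi\sin\eta}\,\frac{\partial}{\partial\eta}p_t^{\C}(0,\eta),
\]
so the proposition follows once we (i) have an asymptotic expansion of $p_t^{\C}(0,\eta)$ on the vertical cut-locus $\eta\in(0,\pi)$, and (ii) differentiate it in $\eta$.

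For (i), I would work from the representation recalled above (from \cite{W}),
\[
p_t^{\C}(0,\eta)=\frac{1}{\sqrt{4\pi t}}\sum_{k\in\Z}\int_{-\infty}^{+\infty}e^{y^2/(4t)}\,q_t^{4n+1}\!\big(\cosh(y+i\eta+2k\pi i)\big)\,dy,
\]
and analyse it as $t\to 0$. For $\eta\in(0,\pi)$ the term $k=0$ dominates. Expressing $q_t^{4n+1}(\cosh s)$ via $\big(-\tfrac{1}{\sinh s}\tfrac{d}{ds}\big)^{2n}e^{-s^2/(4t)}$, the Gaussian $e^{-s^2/(4t)}$ at $s=y+i\eta$ cancels $e^{y^2/(4t)}$ and leaves $e^{\eta^2/(4t)}$ times an integral of $e^{-i\eta y/(2t)}$ against a meromorphic function of $y$ with poles at $y=i(m\pi-\eta)$, $m\in\Z\setminus\{0\}$. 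Closing the contour downward (where $e^{-i\eta y/(2t)}$ decays), the leading contribution is the residue at $y=-i(\pi+\eta)$, i.e. $s=-i\pi$, which produces the factor $e^{-\pi\eta/(2t)}$; together with the leftover $e^{\eta^2/(4t)}$ this yields the weight $e^{-(2\pi\eta+\eta^2)/(4t)}$. Collecting the several terms in the expansion of the differential operator that feed this residue at the leading order $t^{-4n}$, one arrives at
\[
p_t^{\C}(0,\eta)=\frac{\eta^{2n-1}}{2^{6n}(2n-1)!\,t^{4n}}\,e^{-\frac{2\pi\eta+\eta^2}{4t}}\,\big(1+O(t)\big),
\]
uniformly for $\eta$ in compacts of $(0,\pi)$. (If this — or the equivalent expansion in \cite{W} or \cite{BD} — is quoted directly, step (i) is complete.)

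Step (ii) is elementary. Differentiating in $\eta$, the derivative of the exponential weight brings down $-\tfrac{2\pi+2\eta}{4t}=-\tfrac{\pi+\eta}{2t}$, while the derivatives of the algebraic prefactor $\eta^{2n-1}$ and of the remainder $1+O(t)$ are smaller by a factor $t$, so
\[
\frac{\partial}{\partial\eta}p_t^{\C}(0,\eta)=-\frac{\pi+\eta}{2t}\cdot\frac{\eta^{2n-1}}{2^{6n}(2n-1)!\,t^{4n}}\,e^{-\frac{2\pi\eta+\eta^2}{4t}}\,\big(1+O(t)\big).
\]
Inserting this into the identity, using $e^{-4nt}=1+O(t)$ and $\tfrac{1}{2\pi}\cdot\tfrac{\pi+\eta}{2}=\tfrac{\pi+\eta}{4\pi}$, gives exactly the asserted formula. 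To make the differentiation rigorous it is cleaner not to differentiate the asymptotic expansion formally but to differentiate under the integral sign in the series for $p_t^{\C}$ and rerun the contour/residue estimate of step (i) directly on $\partial_\eta p_t^{\C}(0,\eta)$; this is routine once that machinery is set up.

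The main obstacle is step (i): obtaining the sharp constant $1/(2^{6n}(2n-1)!)$ and the exponents $\eta^{2n-1}$, $t^{-4n}$. Unlike on the diagonal, the leading term is not produced by the single replacement $\big(-\tfrac{1}{\sinh s}\tfrac{d}{ds}\big)^{2n}e^{-s^2/(4t)}\approx\big(\tfrac{s}{2t\sinh s}\big)^{2n}e^{-s^2/(4t)}$: after continuing to $s=-i\pi$ and shifting the contour, several terms of the operator expansion — carrying different pole orders and different powers of $1/t$ — contribute residues all of order $t^{-4n}$, and the stated constant emerges only after summing them (equivalently, after a genuine saddle-point analysis). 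If one prefers to avoid this bookkeeping, the corresponding estimate can be quoted from \cite{W} or \cite{BD} and the proof collapses to the one-line differentiation of step (ii).
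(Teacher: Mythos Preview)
Your approach is essentially the same as the paper's: use the differential identity $p_t(0,\eta)=-\frac{e^{-4nt}}{2\pi\sin\eta}\partial_\eta p_t^{\C}(0,\eta)$, quote the small-time asymptotic $p_t^{\C}(0,\eta)=\frac{\eta^{2n-1}}{2^{6n}t^{4n}(2n-1)!}e^{-(2\pi\eta+\eta^2)/(4t)}(1+O(t))$ from \cite{W}, and differentiate. The paper's proof is literally those three lines; your additional discussion of how one would rederive the $p_t^{\C}$ asymptotic by residues and your remark about justifying term-by-term differentiation are extra care the paper omits, but the strategy is identical.
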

\begin{proof}
Since   
\[
-\frac{e^{-4nt}}{2\pi\cosh r\sin \eta}\frac{\partial}{\partial \eta} p_t^{\C}(r, \eta)=p_t(r,\eta),
\]
we just need to plug in the small time asymptotic of $p_t^{\C}$ on the cut locus. Recall
\[
p_t^{\C}(0, \eta)=\frac{ \eta^{2n-1}}{2^{6n}t^{4n}(2n-1)!}e^{-\frac{2\pi \eta+ \eta^2}{4t}}(1+O(t)),
\]
we then have the conclusion.
\end{proof}

We now deduce the small time behavior of the kernel on the horizontal base space of $\mathbf{AdS}^{4n+3}(\mathbb{H})$. i.e. $(r,0)$, $r\not=0$.
\begin{prop}\label{propr0}
For $r\in(0,\infty)$,  we have
\[
p_t(r,0)=\frac{1}{(4\pi t)^{2n+\frac{3}{2}}}\left(\frac{r}{\sinh r}\right)^{2n+1}e^{-\frac{r^2}{4t}}\left(\frac{1}{r\coth r-1}\right)^{\frac{3}{2}}(1+O(t)).
\]
\end{prop}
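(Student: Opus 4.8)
The plan is to start from the leading-order expression \eqref{eq-pt-leading} for $p_t(r,\eta)$ specialized to $\eta = 0$, namely
\[
p_t(r,0) = \frac{e^{t}}{\sqrt{\pi t}} \int_0^{+\infty} \frac{\sinh y \cdot y}{2t} \, e^{\frac{y^2}{4t}} q_{t,4n+3}(\cosh r \cosh y)\, dy + O(e^{-C/t}),
\]
where I have used $\sin(\eta y/(2t))/\sin\eta \to y/(2t)$ as $\eta \to 0$. Next I would substitute the small-time expansion \eqref{eq9} of the hyperbolic heat kernel with $\cosh\delta' = \cosh r \cosh y$, keeping only the leading factor $(4\pi t)^{-(2n+3/2)}(\delta'/\sinh\delta')^{2n+1} e^{-\delta'^2/(4t)}$ for the $(1+O(t))$ statement. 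This turns the integral into
\[
p_t(r,0) = \frac{e^t}{\sqrt{\pi t}\,(4\pi t)^{2n+3/2}} \int_0^{+\infty} \frac{y \sinh y}{2t} \left(\frac{\delta'}{\sinh\delta'}\right)^{2n+1} e^{\frac{y^2 - \delta'^2}{4t}}\, dy \,(1+O(t)).
\]

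The key step is then a Laplace-method analysis of this integral as $t \to 0$. The exponent is $\varphi(y) := y^2 - \delta'(y)^2$ where $\cosh\delta'(y) = \cosh r \cosh y$; one computes $\delta'(0) = r$, so $\varphi(0) = -r^2$, and differentiating $\cosh\delta'\,\delta'' {}' = \cosh r \sinh y$ gives $\delta'{}'(0) = 0$, hence $\varphi'(0) = 0$, so $y = 0$ is the critical point. A second differentiation yields $\delta'{}''(0) = (\cosh r \cosh 0)/\sinh r \cdot (1/\text{something})$ — more precisely, from $\sinh\delta'\,\delta'{}' = \cosh r \sinh y$ one gets $\sinh\delta'(0)\,\delta'{}''(0) = \cosh r$, so $\delta'{}''(0) = \cosh r/\sinh r = \coth r$, whence $\varphi''(0) = 2 - 2(\delta'{}'(0)^2 + \delta'(0)\delta'{}''(0)) = 2 - 2r\coth r < 0$. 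So the Gaussian width of the peak at $y=0$ scales like $\sqrt{t}$, and the extra factor $y \sinh y/(2t) \sim y^2/(2t)$ in the integrand means the integral behaves like $\int y^2 e^{\varphi''(0) y^2/(4t)} dy \sim t^{3/2}/|\varphi''(0)|^{3/2}$ up to constants. Evaluating the prefactors at $y = 0$ (so $\delta' = r$) and collecting, the $t$-powers combine as $t^{-1/2} \cdot t^{-(2n+3/2)} \cdot t^{-1} \cdot t^{3/2} = t^{-(2n+3/2)}$, matching the claimed $(4\pi t)^{-(2n+3/2)}$, while the factor $|\varphi''(0)|^{-3/2} = (2r\coth r - 2)^{-3/2}$ produces the $(r\coth r - 1)^{-3/2}$ term after the $2$'s are absorbed into the normalization; the factor $(r/\sinh r)^{2n+1} e^{-r^2/(4t)}$ comes directly from evaluating $(\delta'/\sinh\delta')^{2n+1} e^{-\delta'^2/(4t)}$ at $\delta' = r$ together with $e^{\varphi(0)/(4t)} = e^{-r^2/(4t)}$ from the critical value.

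The main obstacle I anticipate is bookkeeping of constants: one must carefully track the Gaussian normalization $\int_{-\infty}^\infty y^2 e^{-ay^2} dy = \tfrac{1}{2}\sqrt{\pi}\,a^{-3/2}$ against the powers of $4\pi$ hidden in $(4\pi t)^{2n+3/2}$ and the $\sqrt{\pi t}$ in the prefactor, and confirm they cancel to leave exactly $(4\pi t)^{-(2n+3/2)}$ with no stray numerical factor. A secondary point requiring care is the justification that the tails of the $y$-integral away from $0$ contribute only $O(e^{-c/t})$ relative error: since $\varphi(y) = y^2 - \delta'(y)^2 \le -r^2$ with equality only at $y=0$ (because $\delta'(y) \ge y$ with strict monotonicity of the gap for $r > 0$), standard Watson's lemma / Laplace asymptotics arguments apply, but one should note $\varphi(y)\to -\infty$ as $y\to\infty$ ensures integrability. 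The $O(t)$ correction term then follows by carrying the next order in \eqref{eq9} through the same Laplace expansion, which I would only indicate rather than compute in full.
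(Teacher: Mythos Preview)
Your proposal is correct and follows exactly the same route as the paper: start from \eqref{eq-pt-leading} at $\eta=0$, substitute the expansion \eqref{eq9}, and apply the Laplace method to the resulting integral with phase $\varphi(y)=y^2-\delta'(y)^2$ and critical point $y=0$. In fact you supply more detail than the paper, which simply writes the integral and then says ``Similarly as in \cite{W}, we can analyze it by the Laplace method and obtain the desired result''; your computation of $\varphi''(0)=2(1-r\coth r)$ and the power-counting for the $y^2/(2t)$ prefactor are precisely what is needed to recover the $(r\coth r-1)^{-3/2}$ factor and the correct $t$-exponent.
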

\begin{proof}
By \eqref{eq-pt-leading} we have
\[
p_t(r,0)=\frac{e^{t}}{\sqrt{4\pi t}}\int_{-\infty}^{+\infty}(\sinh y)\frac{y}{2t}e^{\frac{y^2}{4t}}q_t(\cosh r\cosh y)dy,
\]
and by plugging in (\ref{eq9}), we obtain that 
\[
p_t(r,0)=\frac{1}{(4\pi t)^{2n+2}}(1+O(t))\int_{-\infty}^\infty e^{\frac{y^2-(\cosh^{-1} (\cosh r\cosh y))^2}{4t}}\left(\frac{ y}{2t}\right)\sinh y
\left(\frac{\cosh^{-1} (\cosh r\cosh y)}{\sqrt{\cosh^2r\cosh^2y-1}} \right)^{2n+1}dy
\]
Similarly as in \cite{W}, we can analyze it by the Laplace method and obtain the desired result.
\end{proof}

For the case  $(r,\eta)$,  with $r\not=0$,  we use the steepest descent method. Similarly we obtain
 
\begin{prop}\label{asym}
Let  $r\in(0,\infty)$, $\eta\in[0,\pi)$. 
Then when $t\to 0$,
\begin{equation}\label{sta}
p_t(r,\eta)=\frac{1}{(4\pi t)^{2n+\frac{3}{2}}} \frac{\sin\varphi(r,\eta)}{\sin\eta\sinh r}\frac{(\cosh^{-1} u(r,\eta))^{2n+1}}{\sqrt{\frac{u(r,\eta)\cosh^{-1} u(r,\eta)}{\sqrt{u^2(r,\eta)-1}}-1}}\frac{e^{-\frac{(\varphi(r,\eta)-\eta)^2\tanh^2 r}{4t\sin^2(\varphi(r,\eta))}}}{(u(r,\eta)^2-1)^{n}}(1+O(t)),
\end{equation}
where $u(r,\eta)=\cos r\cos\varphi(r,\eta)$ and $\varphi(r,\eta)$ is the unique solution in $\left(-\arccos\left( \frac{1}{\cosh r}\right),\arccos\left( \frac{1}{\cosh r}\right)\right)$ to the equation
\begin{equation}\label{varphi}
\varphi(r,\eta)-\eta=\cosh r\sin\varphi(r,\eta)\frac{\cosh^{-1}(\cos \varphi(r,\eta)\cosh r)}{\sqrt{\cosh^2r\cos^2\varphi(r,\eta)-1}},
\end{equation}

\end{prop}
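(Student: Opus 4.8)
The plan is to derive \eqref{sta} by applying the steepest-descent (Laplace) method to the leading term \eqref{eq-pt-leading} of the integral representation, exactly as was done for the real hyperbolic base case in Proposition \ref{propr0} and in the complex case treated in \cite{W}. Starting from
\[
p_t(r,\eta)=\frac{e^{t}}{\sqrt{\pi t}}\int_0^{+\infty}\frac{\sinh y\,\sin\!\left(\frac{\eta y}{2t}\right)}{\sin\eta}\,e^{\frac{y^2-\eta^2}{4t}}q_{t,4n+3}(\cosh r\cosh y)\,dy+O(e^{-C/t}),
\]
I would first substitute the small-time expansion \eqref{eq9} of $q_{t,4n+3}(\cosh\delta')$ with $\cosh\delta'=\cosh r\cosh y$, and rewrite $\sin(\eta y/2t)=\frac{1}{2i}(e^{i\eta y/2t}-e^{-i\eta y/2t})$. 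This exhibits the integral as a sum of two contour integrals whose integrands, after extending the $y$-integration to a suitable path, have the form (amplitude)$\times e^{\Phi(y)/4t}$ with phase $\Phi(y)=y^2-\delta'(y)^2\pm 2i\eta y$, where $\delta'(y)=\cosh^{-1}(\cosh r\cosh y)$.

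The second step is to locate the critical point of $\Phi$. Differentiating, $\Phi'(y)=2y-2\delta'(y)\delta''(y)\pm 2i\eta$, and using $\delta'(y)\delta''(y)\sinh\delta'(y)=\cosh r\sinh y\cosh y\cdot\frac{\delta'(y)}{\sinh\delta'(y)}$ — more transparently, $\frac{d}{dy}\delta'(y)=\frac{\cosh r\sinh y}{\sinh\delta'(y)}$ — the stationarity condition becomes, after the natural trigonometric substitution $y\mapsto i\varphi$ (i.e. passing to the ``Wick-rotated'' variable so that $\cosh y=\cos\varphi$, $\sinh y=i\sin\varphi$, and $\cosh r\cos\varphi=u$ with $\cosh^{-1}u=\cosh^{-1}(\cosh r\cos\varphi)$ real for $|\varphi|<\arccos(1/\cosh r)$), exactly equation \eqref{varphi}:
\[
\varphi-\eta=\cosh r\sin\varphi\,\frac{\cosh^{-1}(\cos\varphi\cosh r)}{\sqrt{\cosh^2 r\cos^2\varphi-1}}.
\]
I would check that the right-hand side is a strictly increasing odd function of $\varphi$ on the stated interval, blowing up to $\pm\infty$ at the endpoints, while the left-hand side $\varphi-\eta$ is linear; hence there is a unique solution $\varphi(r,\eta)$, which justifies the uniqueness claim. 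The two exponentials $\pm i\eta y$ give critical points $\pm\varphi(r,\eta)$, and one checks that only one contributes to leading order (the other being exponentially subdominant or cancelling against the $1/(2i)$ prefactor), producing the single Gaussian factor $e^{-(\varphi-\eta)^2\tanh^2 r/(4t\sin^2\varphi)}$ after evaluating $\Phi$ at the critical point and simplifying with \eqref{varphi}.

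The third step is the routine second-order Laplace expansion: evaluate $\Phi''$ at $\varphi(r,\eta)$ to get the Gaussian normalization $\sqrt{2\pi\cdot 4t/|\Phi''|}$, collect the amplitude factors — the $\sinh y=i\sin\varphi$ from the measure, the $\frac{y}{2t}\sim\frac{i\varphi}{2t}$ (or rather the $\eta/2t$ prefactor), the $1/\sin\eta$, the power $(\delta'/\sinh\delta')^{2n+1}=(\cosh^{-1}u)^{2n+1}/(u^2-1)^{(2n+1)/2}$ from \eqref{eq9}, and the powers of $4\pi t$ — and verify that everything assembles into the stated expression with the denominator $\sqrt{\frac{u\cosh^{-1}u}{\sqrt{u^2-1}}-1}$, which is precisely $\sqrt{|\Phi''(\varphi)|}$ up to elementary factors. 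The main obstacle is managing the contour deformation and the $O(e^{-C/t})$ error control: one must verify that the $y$-integration path $[0,\infty)$ can be deformed to pass through the complex critical point $i\varphi(r,\eta)$ without crossing the branch singularities of $\delta'(y)$ (these occur where $\cosh r\cosh y=1$, i.e. where the square root $\sqrt{\cosh^2 r\cos^2\varphi-1}$ vanishes, explaining the restriction of $\varphi$ to the open interval) and that the tails along the deformed contour are genuinely exponentially negligible. This is the same analytic bookkeeping carried out in \cite{W} for the complex case, so I would invoke that argument and emphasize only the points where the quaternionic dimension count ($4n+3$ rather than $2n+1$) changes the powers.
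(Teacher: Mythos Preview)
Your approach is correct and essentially identical to the paper's: both start from the leading term \eqref{eq-pt-leading}, substitute the expansion \eqref{eq9} of $q_{t,4n+3}$, identify the phase $f(y)=(\cosh^{-1}(\cosh r\cosh y))^2-(y-i\eta)^2$, locate its critical point at $y=i\varphi(r,\eta)$ via \eqref{varphi}, and apply steepest descent. The paper's proof is in fact terser than yours---it simply invokes Lemma~3.6 of \cite{W} for the critical-point data (value and second derivative of $f$ at $i\varphi$) and then says ``follow the idea in \cite{W}''---so your more explicit discussion of uniqueness and contour deformation is a welcome elaboration rather than a deviation.
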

\begin{proof}
From Lemma 3.6 in \cite{W} we know that 
\[
f(y)=(\cosh^{-1} (\cosh r \cosh y))^2-(y-i\eta)^2
\]
has a critical point at $i\varphi(r,\eta)$ and 
\[
f^{''}(i\varphi(r,\eta))=\frac{2\sinh^2 r}{u(r,\theta)^2-1}\left(\frac{u(r,\theta)\cosh^{-1} u(r,\theta)}{\sqrt{u^2(r,\theta)-1}}-1 \right),
\]
is positive, where $u(r,\theta)=\cosh r\cos\varphi(r,\theta)$. Also
\begin{equation}\label{eq-f-i-varphi}
f(i\varphi(r,\theta))=(\cosh^{-1}(\cosh y\cos \varphi))^2-(\varphi-\theta)^2=\frac{(\varphi(r,\theta)-\theta)^2\tanh^2 r}{\sin^2(\varphi(r,\theta))}.
\end{equation}
Then we follow the idea in \cite{W} and use steepest descent method to obtain  the desired conclusion.
\end{proof}

\section{The subelliptic heat kernel on the twistor space of $\mathbb{H}H^n$}\label{sec-twistor}

\subsection{Radial part of the sub-Laplacian on $\mathbb{C}H_1^{2n+1}$}

Besides the action of  $\mathbf{SU}(2)$ on $\mathbf{AdS}^{4n+3}(\mathbb{H})$ that induces the quaternionic anti de-Sitter fibration which was studied in the previous sections, we can also consider the action of $\mathbb{S}^1$ on $\mathbf{AdS}^{4n+3}(\mathbb{H})$ that induces the  fibration:
\begin{align*}
\mathbb{S}^1 \to  \mathbf{AdS}^{4n+3}(\mathbb{H})  \to  \mathbb{C}H_1^{2n+1},
\end{align*}
where we simply define $ \mathbb{C}H_1^{2n+1}$ as the complex pseudo-hyperbolic space $ \mathbf{AdS}^{4n+3}(\mathbb{H})  / \mathbb{S}^1$. We refer to  \cite{BAI} for a general definition of complex pseudo-hyperbolic spaces. Note that the metric on $ \mathbb{C}H_1^{2n+1}$ has signature $(4n,2)$. We can then see $\mathbb{S}^1$ as a subgroup of $\mathbf{SU}(2)$ and have the classical Hopf  fibration
\begin{align*}
 \mathbb{S}^1  \to \mathbf{SU}(2) \to \mathbb{CP}^1
\end{align*}

We can therefore construct the following commutative fibration diagram
\begin{diagram}\label{diag}
  & & \mathbb{S}^1 & & \\
  & \ldTo & \dTo & & \\
 \mathbf{SU}(2) & \rTo &\mathbf{AdS}^{4n+3}(\mathbb{H}) & \rTo & \mathbb{H}H^n \\
 \dTo & &\dTo & \ruTo & \\
 \mathbb{CP}^1 & \rTo & \mathbb{C}H_1^{2n+1} & & 
\end{diagram}

The pseudo-Riemannian submersion $\mathbb{C}H_1^{2n+1} \to \mathbb{H}H^n $ obtained  at the bottom of the diagram is similar to Example 4, page 4 in \cite{BAI} and the fibration
\[
\mathbb{CP}^1 \to \mathbb{C}H_1^{2n+1} \to \mathbb{H}H^n
\]
shows that $\mathbb{C}H_1^{2n+1}$ is therefore the twistor space of the quaternionic contact manifold $\mathbb{H}H^n$.

We consider the sub-Laplacian ${\mathcal{L}}$ on  $\mathbb{C}H_1^{2n+1} $, it is then the lift of the Laplace-Beltrami operator of  $\mathbb{H}H^n$. From the above diagram, ${\mathcal{L}}$ is also the projection of the sub-Laplacian $L$ of $\mathbf{AdS}^{4n+3}(\mathbb{H}) $ on $\mathbb{C}H_1^{2n+1} $. As we have seen before,  the radial part of $L$ is 
\[
L=\frac{\partial^2}{\partial r^2}+((4n-1)\coth r+3\tanh r)\frac{\partial}{\partial r}+\tanh^2r (\frac{\partial^2}{\partial \eta^2}+2\cot \eta\frac{\partial}{\partial \eta})
\]
where $r$ is the radial coordinate on $\mathbb{H}H^n$ and $\eta$ the radial coordinate on $ \mathbf{SU}(2) $. The operator
\[
\Delta_{\mathbf{SU}(2)}=\frac{\partial^2}{\partial \eta^2}+2\cot \eta\frac{\partial}{\partial \eta}
\]
is the radial part of the Laplace-Beltrami operator on $ \mathbf{SU}(2) $. As it has been proved in Baudoin-Bonnefont (see \cite{BB}), by using the Hopf fibration,
\begin{align*}
\mathbb{S}^1 \to  \mathbf{SU}(2)  \to  \mathbb{CP}^{1}
\end{align*}
we can write
\[
\frac{\partial^2}{\partial \eta^2}+2\cot \eta\frac{\partial}{\partial \eta}=\frac{\partial^2}{\partial \phi^2}+2\cot 2\phi\frac{\partial}{\partial \phi}+(1+\tan^2\phi )\frac{\partial^2}{\partial \theta^2}
\]
where $\phi$ is the radial coordinate on $\mathbb{CP}^{1}$ and $\frac{\partial}{\partial \theta}$ the generator of the action of $\mathbb{S}^1$ on $\mathbf{SU}(2)$.
Therefore the radial part of the sub-Laplacian $\mathcal{L}$ on  $\mathbb{C}H_1^{2n+1}$ is given by 
\begin{equation}\label{L-CP}
\mathcal{L}=\frac{\partial^2}{\partial r^2}+((4n-1)\coth r+3\tanh r)\frac{\partial}{\partial r}+\tanh^2r\left( \frac{\partial^2}{\partial \phi^2}+2\cot 2\phi\frac{\partial}{\partial \phi}\right),
\end{equation}
and the invariant measure, up to a normalization constant,  is $(\sinh r)^{4n-1} (\cosh r)^3 \sin 2\phi dr d\phi$.

\subsection{Integral representation of the subelliptic heat kernel}

From \eqref{L-CP} we notice that $\frac{\partial^2}{\partial \phi^2}+2\cot 2\phi\frac{\partial}{\partial \phi}$ is the radial part of the Laplacian on $\mathbb{CP}^1$.  It is known that the eigenfunction associated to the eigenvalue $-4m(m+1)$ is given by $P_m^{0,0}(\cos2\phi)$ where $P_m^{0,0}$ is the Legendre polynomial
\[
P_m^{0,0}(x)=\frac{(-1)^m}{2^m m!}\frac{d^m}{dx^m}(1-x^2)^{m}.
\]
Moreover, the heat kernel of $\frac{\partial^2}{\partial \phi^2}+2\cot 2\phi\frac{\partial}{\partial \phi}$ is given by
\[
u_t (\phi_1,\phi_2)=\sum_{m=0}^{+\infty} (2m+1) e^{-4m(m+1)t} P_m^{0,0}(\cos2\phi_1)P_m^{0,0}(\cos2\phi_2)
\]

By using the same methods as before, we obtain:

\begin{theorem}
Let $h_t(r,\phi)$ be the subelliptic heat kernel of $\mathcal L$.
Then, one has for $r \ge 0$ and $ \phi \in [0,\pi)$
 \begin{align*}
 & h_t(r,\phi)\\
 = & \int_{0}^{\infty} (\sinh u)^2 \left\{\sum_{m=0}^{+\infty} (2m+1) e^{-4m(m+1)t} P_m^{0,0}(\cos2\phi )P_m^{0,0}(\cosh 2u)\right\}q_{t, 4n+3}(\cosh r \cosh u) du.
\end{align*}
where
\begin{equation*}
q_{t, 4n+3}(\cosh s) := \frac{e^{-(2n+1)^2t}}{(2\pi)^{2n+1}\sqrt{4\pi t}}\left(-\frac{1}{\sinh(s)}\frac{d}{ds}\right)^{2n+1} e^{-s^2/(4t)},
\end{equation*} 
is the heat kernel on the real $4n+3$ dimensional real hyperbolic space.

\end{theorem}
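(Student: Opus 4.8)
The plan is to mimic exactly the two-step strategy already used for $p_t(r,\eta)$ in Theorem \ref{IntRep}, now projecting down the extra $\mathbb{S}^1$ fiber via the Hopf fibration $\mathbb{S}^1\to\mathbf{SU}(2)\to\mathbb{CP}^1$. Since $\mathcal{L}$ decomposes as $\Delta_{\bH H^n}+\tanh^2r\,\Delta_{\mathbb{CP}^1}$ with the two summands commuting (the first acting on $r$, the second on $\phi$), and since $\Delta_{\mathbb{CP}^1}=\frac{\partial^2}{\partial\phi^2}+2\cot 2\phi\frac{\partial}{\partial\phi}$ has the Legendre polynomials $P_m^{0,0}(\cos 2\phi)$ as eigenfunctions with eigenvalues $-4m(m+1)$, I would first expand
\[
h_t(r,\phi)=\sum_{m\ge 0} g_m(t,r)\,P_m^{0,0}(\cos 2\phi),
\]
and observe that each coefficient $g_m(t,\cdot)$ solves the radial heat equation
\[
\partial_t g_m=\left\{\frac{\partial^2}{\partial r^2}+((4n-1)\coth r+3\tanh r)\frac{\partial}{\partial r}+\frac{4m(m+1)}{\cosh^2 r}-4m(m+1)\right\}g_m.
\]

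Second, I would identify the operator $\frac{\partial^2}{\partial r^2}+((4n-1)\coth r+3\tanh r)\frac{\partial}{\partial r}+\frac{4m(m+1)}{\cosh^2 r}+(2n+1)^2$ with an instance of the operator $L_{2n}^{\alpha\beta}$ of \cite{Int-OM}, this time with $\alpha=1+m$, $\beta=-m$ (so that the $\cosh^{-2}r$ coefficient $(\alpha-\beta)^2-\ldots$ matches $4m(m+1)$; one checks $\alpha\beta=-m(m+1)$ and the relevant combination reproduces the required constant). Then, precisely as in the second proof of Theorem \ref{IntRep}, I would invoke Theorem 2 of \cite{Int-OM} for the wave propagator $\cos(s\sqrt{-\mathcal{L}_m})$, apply the subordination formula $e^{t\mathcal{L}_m}=\frac{1}{\sqrt{4\pi t}}\int_{\mathbb{R}}e^{-s^2/(4t)}\cos(s\sqrt{-\mathcal{L}_m})\,ds$, perform $2n+1$ integrations by parts to move the $(\frac{1}{\sinh s}\frac{d}{ds})^{2n+1}$ onto the Gaussian, recognize $q_{t,4n+3}$, and change variables $\cosh s=\cosh r\cosh u$. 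The kernel $K_m$ appearing from \cite{Int-OM} will involve ${}_2F_1(m+2,-m,\tfrac32;\frac{1-\cosh u}{2})$ at $w=0$, and the final step is the hypergeometric identity that reduces this Jacobi-type expression to $P_m^{0,0}(\cosh 2u)$ — the analytic continuation of the Legendre polynomial — up to the factor $(\sinh u)^2$; concretely ${}_2F_1(m+2,-m,\tfrac32;\frac{1-x}{2})=P_m^{0,0}(\ldots)$-type, evaluated with $x=\cosh 2u$ after the substitution, which is the exact hyperbolic counterpart of the Chebyshev identity $(m+1){}_2F_1(m+2,-m,\tfrac32;\frac{1-\cosh u}{2})=\frac{\sinh[(m+1)u]}{\sinh u}$ used before.

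Alternatively — and this is cleaner — I would derive the formula directly from Theorem \ref{IntRep} together with the Baudoin–Bonnefont relation already quoted, namely that $\Delta_{\mathbf{SU}(2)}$ pushes forward under the Hopf fibration to $\Delta_{\mathbb{CP}^1}+(1+\tan^2\phi)\partial_\theta^2$. Since $h_t$ is the $\mathbb{S}^1$-average of $p_t$ over the fiber, and since on $\mathbf{SU}(2)$ one has the classical expansion of the $\mathbf{SU}(2)$ heat kernel into characters, the identity
\[
\sum_{m\ge 0} e^{-m(m+2)t}\sin[(m+1)\eta]\,\frac{\sinh[(m+1)u]}{\sin\eta}
\]
summed against the $\mathbb{CP}^1$-projection should collapse (via the quadratic transformation $P_m^{0,0}(\cos 2\phi)$ versus $U_{2m}$ or $U_{2m+1}$ of $\cos\phi$, and the addition theorem for Legendre polynomials) onto $\sum_m(2m+1)e^{-4m(m+1)t}P_m^{0,0}(\cos 2\phi)P_m^{0,0}(\cosh 2u)$. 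Since $4m(m+1)=(2m)(2m+2)=2m(2m+2)$ matches the eigenvalue $m'(m'+2)t$ of $\mathbf{SU}(2)$ at $m'=2m$, the even-index terms of the $\mathbf{SU}(2)$ sum are exactly what survives after the $\mathbb{S}^1$-integration, which is the arithmetic heart of the projection.

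The main obstacle I anticipate is purely the special-function bookkeeping: getting the parameters $(\alpha,\beta)=(1+m,-m)$ in $L_{2n}^{\alpha\beta}$ correct and then verifying the hypergeometric-to-Legendre reduction at the boundary value $w=0$, including the precise constant and the analytic continuation $\cos 2\phi\mapsto\cosh 2u$ under $\cosh s=\cosh r\cosh u$. Once that identity is in hand, the integration-by-parts manipulation and the recognition of $q_{t,4n+3}$ are verbatim copies of the argument in the second proof of Theorem \ref{IntRep}, so I would state them briefly and refer back rather than redo them. The eigenvalue normalization — checking that the constant absorbed is $-4m(m+1)-(2n+1)^2$ so that the $e^{-4m(m+1)t}$ prefactor emerges and the $(2n+1)^2$ cancels against the one hidden in $q_{t,4n+3}$ — is the one place where a sign or factor slip would propagate, so I would double-check it against the $m=0$ case, which must reduce to $q_{t,4n+3}(\cosh r\cosh u)$ integrated against $(\sinh u)^2$ and hence to the base heat kernel on $\bH H^n$.
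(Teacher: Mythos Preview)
Your proposal is correct and is essentially an elaboration of the paper's own proof, which consists of the single sentence ``By using the same methods as before, we obtain.'' You correctly identify the Legendre expansion in the $\phi$-variable, the resulting radial operator with $4m(m+1)/\cosh^2 r$ potential, and both the Int-OM route and the Hopf-projection route from Theorem~\ref{IntRep}; the most direct version of ``the same methods'' is simply to rerun the Wick-rotation argument with the $\mathbb{CP}^1$ heat kernel $u_t(\phi_1,\phi_2)=\sum_{m\geq 0}(2m+1)e^{-4m(m+1)t}P_m^{0,0}(\cos 2\phi_1)P_m^{0,0}(\cos 2\phi_2)$ in place of $s_t$, so that the analytic continuation $\phi_2\mapsto -iu$ sends $\cos 2\phi_2\mapsto \cosh 2u$ and the stated formula drops out immediately. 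One small correction to your bookkeeping: with $\alpha=1+m$, $\beta=-m$ the hypergeometric coming from \cite{Int-OM} has parameters $(2m+2,-2m,3/2)$, not $(m+2,-m,3/2)$; this is precisely the $m'\!=\!2m$ substitution you mention, and the identity $(2m+1)\,{}_2F_1(2m+2,-2m,\tfrac32;\tfrac{1-\cosh u}{2})=\sinh[(2m+1)u]/\sinh u$ then needs to be matched to $(2m+1)P_m^{0,0}(\cosh 2u)$, which is not quite the naive Chebyshev-to-Legendre reduction --- this is exactly the special-function step you flagged as the obstacle, and it is indeed where care is required in the Int-OM approach (the first-method derivation avoids it entirely).
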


\section{Further developments}\label{sec-further}
\subsection{Quaternionic magnetic Laplacian} 
The second method we used to compute the sub elliptic heat kernel appeal to an operator $L_n^{\alpha\beta}, \alpha, \beta \in \mathbb{R},$ introduced in \cite{Int-OM}. When $\alpha = -\beta$, this is the radial part of the so-called generalized Maass Laplacian and it reduces when $n=1$ to the complex-hyperbolic magnetic Laplacian. Moreover, it was noticed in \cite{BD} that the generalized Maass Laplacian coincides with the partial Fourier transform of the horizontal Laplacian of the complex AdS space with respect to the $U(1)$-fiber coordinate (the dual variable of the Fourier transform is then interpreted as the magnetic field strength). 

On the other hand, a quaternionic magnetic Laplacian with uniform field on $\mathbb{H}$ was defined and studied in \cite{IKZ} and \cite{IKZ1}. There, the author consider the one-form $A$ (magnetic potential): 
\begin{multline*}
2A = -(B_1x+B_2y+B_3z) dt + (B_1t-B_3y+B_2z)dx  \\ + (B_2t+B_3x-B_1z)dy + (B_3t-B_2x+B_1y)dz
\end{multline*}
whose exterior derivative (curvature) is the self-dual (with respect to Hodge operator) two-form corresponding to the uniform magnetic field $B = (B_1,B_2, B_3) \in \mathbb{R}^3$: 
\begin{multline*}
dA = B_1(dt \wedge dx + dy \wedge dz) + B_2(dt \wedge dy + dz \wedge dx) + B_3(dt \wedge dz + dx \wedge dy). 
\end{multline*}
The quaternionic magnetic Laplacian then defined by 
\begin{equation*}
-[(\partial_t + iA_0)^2 + (\partial_x + iA_1)^2 + (\partial_y +iA_2)^2 +(\partial_z+iA_3)^2]
\end{equation*}
where $A_i, 0 \leq i \leq 3$ are the components of $A$. This operator was then identified  as the partial Fourier transform of the horizontal Laplacian of the quaternionic Heisenberg group with respect to the vertical coordinates. Note that if we consider the quaternionic symplectic form 
\begin{equation*}
\omega:= \frac{1}{2}(dw\overline{w} - wd\overline{w}) := \omega_1I+\omega_2J+\omega_3K
\end{equation*}
on $\mathbb{H}$ and if $B = B_1I+B_2J+B_3K$, then the above quaternionic magnetic Laplacian may be written as a Bochner-type Laplacian: 
\begin{equation*}
-(d-i \Re(B\omega))^{\star}(d-i \Re(B\omega))
\end{equation*}
acting on functions, where $d$ is the exterior derivative and $\star$ is the adjoint operator with respect to the flat Riemannian metric on $\mathbb{H} \approx \mathbb{R}^4$. Accordingly, we may define the quaternionic analogue of the generalized Maass Laplacian as the partial Fourier transform of the horizontal Laplacian displayed in \eqref{SubLapl} with respect to the fiber variables $(\phi_1, \phi_2, \phi_3)$. However, note that in contrast with the flat setting, we need to add a weight when we perform the partial Fourier transform in order to neutralize the factor 
\begin{equation*}
\frac{1}{\cos^2\eta} = 1+\tan^2 \eta = 1+\phi_1^2 + \phi_2^2 + \phi_3^2,
\end{equation*}
which amounts to get the translation invariance of the sublaplacian with respect to $(\phi_1, \phi_2, \phi_3)$. Moreover, since 
\begin{equation*}
V_i = \frac{\partial}{\partial w_i} - \frac{1}{\cos^2\eta}\zeta\left(\frac{\partial}{\partial w_i}\right)\frac{\partial}{\partial \phi}, 
\end{equation*} 
where we recall that $\zeta$ is defined in \eqref{QKah}, it would be interesting to check whether the magnetic Laplacian may be written or not as a Bochner-type Laplacian acting on functions.

\subsection{The heat kernel of a sub-d'Alembertian on $H^{4n+3}$}
We may compute the heat kernel  of the sub-d'Alembertian on $H^{4n+3}$ which is given by
\[
\mathcal{L} = \Delta_{H^{4n+3}}-\Delta_P=\frac{\partial^2}{\partial r^2}+((4n-1)\coth r+3\tanh r)\frac{\partial}{\partial r^2}-\tanh^2 r\left(\frac{\partial^2}{\partial \eta^2}+2\coth\eta\frac{\partial}{\partial \eta} \right)
\]
This operator is obtained from the horizontal Laplacian on the quaternionic anti-de Sitter space by complexification of the fiber $\mathbf{SU}(2)$.
Indeed, the spectrum of the `fiber' part is purely continuous: 
\begin{equation*}
\left(\frac{\partial^2}{\partial \eta^2}+2\coth\eta\frac{\partial}{\partial \eta}\right) \phi_{\lambda}^{(1/2, -1/2)}(\eta) = -(\lambda^2+1)\phi_{\lambda}^{(1/2, -1/2)}(\eta), \quad \lambda \in \mathbb{R},  
\end{equation*}
where (\cite{Koor})
\begin{equation*}
\phi_{\lambda}^{(1/2, -1/2)}(\eta) = {}_2F_1\left(\frac{1 + i\lambda}{2}, \frac{1 - i\lambda}{2}, \frac{3}{2}; -\sinh^2(\eta)\right) 
\end{equation*}
is the Jacobi function of parameters $(1/2,-1/2)$. Expanding the heat kernel in the $\eta$-variable as a inverse Fourier-Jacobi transform of some smooth function $(t,r) \mapsto v_{\lambda}(t,r)$ (\cite{Koor}): 
\begin{equation*}
\int_{\mathbb{R}} v_{\lambda}(t,r) \phi_{\lambda}^{(1/2, -1/2)}(\eta) \eta^2d\eta, 
\end{equation*}
the function $v_{\lambda}$ should solve the heat equation associated with the operator 
\[
\frac{\partial^2}{\partial r^2}+((4n-1)\coth r+3\tanh r)\frac{\partial}{\partial r^2} + (\lambda^2+1) \tanh^2 r. 
\]
Using the identity $\tanh^2(r) = 1-1/\cosh^2(r)$, then we are led to 
\[
\left(\frac{\partial^2}{\partial r^2}+((4n-1)\coth r+3\tanh r)\frac{\partial}{\partial r^2} - \frac{\lambda^2+1}{\cosh^2 r} + (\lambda^2+1)\right)v_{\lambda}(t,r) = \partial_t v_{\lambda}(t,r). 
\]
The corresponding heat kernel may then be derived along the same lines written in the second method by choosing the parameters $\alpha, \beta$ such that:
\begin{equation*}
\alpha + \beta = 1, \quad 4\alpha \beta = \lambda^2 + 1, 
\end{equation*}
that is, $\alpha = 1-\beta = (1+i\lambda)/2$. 


\end{document}